%% file: main.tex
\input{preamble}
\title{Strong Artin Conjecture for Generalized Octahedral Representations in \(\GL_3\)}

\author{Wang Junwu}

\begin{document}
	\maketitle
	
	\begin{abstract}
		We prove the strong Artin conjecture for the largest solvable Artin representations in three
		dimensions, whose projective image is isomorphic to the affine special linear group
		\(C_3^2\rtimes \SL(2, 3)\). It is also the last solvable case in three dimensions. This
		is achieved with a new case of base change and automorphic induction for non-Galois quartic
		extensions with no intermediate fields. We additionally deduce the strong Artin conjecture
		for all 6-dimensional primitive solvable Artin representations.
	\end{abstract}

	\tableofcontents
	\section{Introduction}

	Let \(F\) be a number field and let \(\Gamma_F \coloneq \Gal(\overline{F}/F)\) denote its
	absolute Galois group. Consider an irreducible Artin representation \(\rho: \Gamma_F \to
	\GL_n(\mathbb{C})\). The \textbf{strong Artin conjecture} states that there exists a cuspidal
	automorphic representation \(\pi\) of \(\GL_n(\mathbb{A}_F)\) such that for almost all places
	\(v\) of \(F\) where both are unramified, the Satake parameters \(t_v(\pi)\) define the same
	semisimple conjugacy class of \(\GL_n(\mathbb{C})\) as \(\rho(\Frob_v)\). When this is the case,
	\(\pi\) is unique due to strong multiplicity one for \(\GL_n\). In particular, this implies that
	the Artin \(L\)-function \(L(s, \rho)\) is entire, which is called the Artin conjecture.

	When \(n = 2\), the representation \(\rho\) is classified as dihedral, tetrahedral,
	octahedral or icosahedral according to its projective image in \(\PGL_2(\mathbb{C})\).
	The strong Artin conjecture for the first two cases was resolved by Langlands in
	\cite[Chapter 3]{Langlands} as an application of his work on base change for \(\GL_2\) for
	cyclic extensions of prime degree. Moreover, the proof of the tetrahedral case uses the adjoint
	lifting for \(\GL_2\) proven in \cite{GJ}. The octahedral case was resolved by Tunnell in
	\cite{Tunnell} by combining three ingredients, namely the tetrahedral case, base change for
	\(\GL_2\) for quadratic extensions as per \cite{Langlands}, as well as base change for \(\GL_2\)
	for non-Galois cubic extension as proven in \cite{JPSS-Cubic}. When \(F = \mathbb{Q}\) and
	\(\rho\) is odd icosahedral, the strong Artin conjecture was attacked through modularity lifting
	and \(p\)-adic deformation by Buzzard and Taylor in \cite{BT}, which requires residual 
	modularity. This lead to certain families of icosahedral representations being proven modular 
	by Buzzard, Dickinson, Shepherd-Barron and Taylor in \cite{BDST, Taylor}. When Serre's
	modularity conjecture was resolved by Khare and Wintenberger in \cite{KW}, the odd icosahedral
	case over \(\mathbb{Q}\) was then fully resolved.

	When \(n = 3\), a similar classification of \(\rho\) is known. If \(\rho\) is
	imprimitive, it is induced from a character from an index 3 subgroup of \(\Gamma_F\),
	which defines a cubic extension. By automorphic induction of Hecke characters for both
	Galois and non-Galois cubic extensions, due to Jacquet, Piatetski-Shapiro and Shalika
	\cite[Section 14.2]{JPSS-Cubic}, such representations \(\rho\) are automorphic. If \(\rho\) is
	primitive, its projective image in \(\PGL_3(\mathbb{C})\) is classified into six conjugacy
	classes, the isomorphism types of which was first detailed by Blichfeldt in \cite{Blichfeldt},
	namely
	\[
		H_{36}, H_{72}, H_{216}, A_5, \PSL(2, 7)\text{ and }A_6.
	\]
	The first three are solvable Hessian groups which are indexed by their orders.
	To define them, \(H_{216}\) is isomorphic to the affine special linear group
	\(C_3^2 \rtimes \SL(2, 3)\), where \(\SL(2, 3)\) acts by treating \(C_3^2\) as the vector space
	\(\mathbb{F}_3^2\). The groups \(H_{72}\) and \(H_{36}\) are subgroups of \(H_{216}\) where
	\(\SL(2, 3)\) is replaced by its 2-Sylow subgroup \(Q_8\), the quaternion group,
	or a subgroup isomorphic to \(C_4\). The last three cases in the list are the three smallest
	non-abelian simple groups.
	
	The strong Artin conjecture in the cases of \(H_{36}\) and \(H_{72}\) is similar to the
	tetrahedral case, in that they are resolved via base change and adjoint lifting
	for \(\GL_3\). Indeed, Lapid defined generalized tetrahedral representations for \(\GL_q\) where
	\(q\) a prime power in \cite[Section 5]{Lapid2}, and demonstrated that the adjoint lifting for
	\(\GL_q\) implies their modularity. His proof uses base change and automorphic induction
	for \(\GL_n\) over cyclic extensions of prime degree as shown by Arthur and Clozel in \cite{AC}.
	When \(q=3\), his definition exactly covers the cases \(H_{36}\) and \(H_{72}\).
	Thus, the cases \(H_{36}\) and \(H_{72}\) are automorphic as a consequence of the adjoint
	lifting of \(\GL_3\) due to Gan in \cite{Gan}. When the projective image of \(\rho\) is
	\(A_5\), we have that \(\rho\) is isomorphic to a quadratic twist of the adjoint lift of a
	two-dimensional icosahedral representation \(\sigma\), so the modularity of \(\rho\) is
	equivalent to the modularity of \(\sigma\). To the best of our knowledge, no cases of
	\(\PSL(2, 7)\) and \(A_6\) are known to be automorphic.
	
	\subsection{Main result}

	In this paper, we prove the strong Artin conjecture for the case \(H_{216}\), which may be
	regarded as a generalization of octahedral representations. This resolves the strong Artin
	conjecture for all 3-dimensional Artin representations with solvable image.
		
	\begin{theorem}\label{main}
		Let \(\rho: \Gamma_F \to \GL_3(\mathbb{C})\) be a generalized octahedral representation, i.e.
		with projective image isomorphic to \(H_{216}\). Then \(\rho\) is automorphic.
	\end{theorem}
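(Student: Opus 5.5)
Following Tunnell's octahedral argument, I would pass from \(H_{216}\) to the subgroup \(H_{72}\) and then descend. Write \(G\) for the image of \(\rho\) in \(\GL_3(\mathbb{C})\) and \(\bar G \cong H_{216} = C_3^2 \rtimes \SL(2,3)\) for its projective image.

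\textbf{Step 1: reduction to \(H_{72}\).} The quotient \(\SL(2,3) \to A_4 \to C_3\) gives a normal subgroup of index 3 in \(\bar G\), namely \(C_3^2 \rtimes Q_8 \cong H_{72}\). This cuts out a cyclic cubic extension \(K/F\). The restriction \(\rho|_{\Gamma_K}\) has projective image \(H_{72}\), so it is generalized tetrahedral in the sense of Lapid. By Lapid's result together with Gan's adjoint lifting for \(\GL_3\), \(\rho|_{\Gamma_K}\) is automorphic, say corresponding to a cuspidal \(\Pi_K\) on \(\GL_3(\mathbb{A}_K)\). Since \(\rho|_{\Gamma_K}\) is \(\Gal(K/F)\)-invariant, so is \(\Pi_K\). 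Cyclic descent (Arthur--Clozel) then gives three cuspidal \(\pi\) on \(\GL_3(\mathbb{A}_F)\), differing by the cubic characters of \(K/F\), each base changing to \(\Pi_K\). The problem is to decide which twist \(\pi\otimes\chi^i\) matches \(\rho\). Unramified places of \(F\) split in \(K\) cause no trouble. At inert places, however, the local data of \(\Pi_K\) only determine \(\rho(\Frob_v)^3\), so we get no information there.

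\textbf{Step 2: auxiliary field for the inert places.} As in Tunnell, I would use a non-Galois extension whose Frobenius data separate the three twists. The natural choice here is the quartic extension \(E/F\) cut out by the index-4 subgroup \(C_3^2 \rtimes C_6\) (the stabilizer of a point of \(\mathbb{P}^1(\mathbb{F}_3)\) under \(\SL(2,3)\to\PGL(2,3)\)). This extension has no intermediate fields. Over \(E\), the projective image becomes \(C_3^2\rtimes C_6\), which is imprimitive or otherwise of the already-handled type, so \(\rho|_{\Gamma_E}\) is automorphic by the \(H_{36}/H_{72}\) cases or by cubic automorphic induction. Two ingredients are then needed:
\begin{itemize}
\item base change of \(\pi\) from \(F\) to \(E\), a new case for a non-Galois quartic without intermediate fields, which I would prove by a trace-formula or converse-theorem argument analogous to Jacquet--Piatetski-Shapiro--Shalika for cubics;
\item automorphic induction back from \(E\).
\end{itemize}
I expect this step to be the main obstacle. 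My first attempt would be the converse theorem: realise \(\mathrm{Ind}_E^F\) as a twelve-dimensional object, build it from the known cyclic and solvable pieces of the Galois closure (a \(C_3\) then Klein-four tower), and control its \(L\)-functions via Rankin--Selberg integrals and Brauer-type induction.

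\textbf{Step 3: character identity and strong multiplicity one.} Compare \(L\)-functions. The identity \(\mathrm{Ind}_{\Gamma_E}^{\Gamma_F}\mathbf 1 = \mathbf 1 \oplus \tau\), with \(\tau\) three-dimensional, gives
\[
L(s,\pi\times\tau)\ \text{vs.}\ L(s,\rho\otimes\tau),
\]
and this comparison singles out the correct twist \(\pi\otimes\chi^i\). Using the base change of \(\pi\) to \(E\), we get agreement of \(\pi\) with \(\rho\) over \(E\) and over \(K\). Together these determine the traces of Frobenius at almost all places of \(F\), since every element of \(\bar G\) either lies in \(H_{72}\) or is conjugate into \(C_3^2\rtimes C_6\). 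One checks this from the conjugacy classes of \(H_{216}\), and it has to be checked at the level of \(G\), including its central part. A Chebotarev and linear-independence argument then finishes the proof, as in Tunnell: if two candidates agree on those classes they agree everywhere, and strong multiplicity one identifies \(\pi\) as the automorphic representation attached to \(\rho\).
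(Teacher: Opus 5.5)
Your architecture matches the paper's. You use the cyclic cubic $K$ cut out by $H_{72}$, the $A_4$-quartic $E$ cut out by $C_3^2\rtimes C_6$, and the three cubic descents of the $H_{72}$-form. You also have the right final check: a Frobenius either lies in $V_4$ (so $v$ splits completely in $K$) or has order $3$ in $A_4$ (so $v$ has a degree-one place in $E$).

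The genuine gap is the step you yourself flag as the main obstacle, which is the real content of the paper. You are right that $\BC_{E/F}$ for $\GL_3$ can come from the $\GL_3$ converse theorem with $\GL_1$-twists, via $L(s,\BC_{E/F}(\pi)\times\chi)=L(s,\pi\times\AI_{E/F}(\chi))$. But this needs a form on $\GL_4(\mathbb{A}_F)$ matching $\Ind_{W_E}^{W_F}\chi$ at \emph{every} place. It also needs enough knowledge of that form's isobaric summands to exclude a pole. Your sketch cannot supply this:
\begin{itemize}
\item a $\GL_4$ converse theorem needs $\GL_2$-twists, i.e.\ base change of $\GL_2$ to $E$, which is circular;
\item Brauer induction gives only meromorphy;
\item Rankin--Selberg integrals presuppose the $\GL_4$ form.
\end{itemize}
The missing idea is as follows. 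Form $\AI_{L/K}(\chi_L)$ along the Klein four tower by Arthur--Clozel. It is $\Gal(K/F)$-invariant because $\chi_L$ is fixed by $\Gal(L/E)$. Descend it to $F$ and fix the descent by its central character, which is possible because $\omega_{K/F}^{4j}=\omega_{K/F}^{j}$. At places not split in $K$, cubes and determinant still do not determine the parameter: $\{a,b,b\omega,b\omega^2\}$ and $\{a\omega,b,b\omega,b\omega\}$ agree in both. This is where Kim's exterior square lift for $\GL_4$ is essential. One realizes $\wedge^2\Ind_{W_E}^{W_F}\chi$ by hand, either as an induction from $M_1$ or as a sum of two inductions from $K$. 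One then identifies it with $\wedge^2$ of the descent, and an elementary multiset lemma forces equality of parameters. The cases where $\chi_L$ is $V_4$-invariant are done by hand, and they give the cuspidality criterion that rules out poles.

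Your Step 3 selection mechanism also fails. Here $\tau=\Ind_{\Gamma_E}^{\Gamma_F}\mathbf 1-\mathbf 1$ is the $3$-dimensional representation of $A_4$, i.e.\ $\Ind_{\Gamma_K}^{\Gamma_F}\epsilon$ for a quadratic character $\epsilon$ of $\Gal(L/K)$. Hence $\tau\otimes\omega_{K/F}\cong\tau$. In fact $L^S(s,\pi_i\times\tau)=L^S(s,\pi_K\otimes\epsilon)=L^S(s,\rho\otimes\tau)$ holds for \emph{all three} descents, so the comparison carries no information about the twist.

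The selection must go through $E$ directly:
\begin{itemize}
\item $\BC_{E/F}(\pi_1)$ and the form $\pi_E$ attached to the imprimitive $\rho_E$ have the same cuspidal base change to $L$, so they differ by a power of $\omega_{L/E}$.
\item Since $KE=L$ and $K\cap E=F$, the character $\omega_{K/F}\circ N_{E/F}$ generates the characters of $L/E$.
\item So $\BC_{E/F}(\pi_1\otimes\omega_{K/F}^i)=\BC_{E/F}(\pi_1)\otimes\omega_{L/E}^i$ runs through all three twists, and one descent base changes to $\pi_E$.
\end{itemize}
With that choice, your final place-by-place check goes through.
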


	The proof strategy and the rest of the paper is organized as follows. In section 2, we recall
	that certain functorial liftings and modularity results are strong, in the sense of respecting
	local Langlands correspondence at all places. In section 3, we prove a case of strong automorphic
	induction of Hecke characters for non-Galois quartic extensions using the
	exterior square lifting of \(\GL_4\) due to Kim in \cite{Kim}. This implies, via converse
	theorem, base change for \(\GL_2\) and \(\GL_3\) over such quartic extensions. We additionally
	give the full cuspidality criterion for both. Thereafter, in section 4, we prove the strong
	Artin conjecture for the case \(H_{216}\) by adapting the argument of Tunnell in \cite{Tunnell}.
	In the last section, we discuss primitive solvable representations in arbitrary dimensions
	using the work of Suprunenko in \cite{Suprunenko}. This motivates a definition of tetrahedral
	and octahedral representations in arbitrary dimensions, and allows us to deduce strong Artin
	conjecture for all primitive solvable Artin representations of dimension 6 using the
	Rankin-Selberg lift from \(\GL_2 \times \GL_3\) to \(\GL_6\) due to Kim and Shahidi in \cite{KS}.
	Throughout, we heavily use the existence, image and fibre of base change and automorphic
	induction for \(\GL_n\) over cyclic extensions of prime degree due to Arthur and Clozel in
	\cite{AC}.

	\subsection*{Acknowledgement}
	I thank Professor Gan Wee Teck for extensive advice and guidance at all stages of this work.

	\section{Preliminaries and notations}

	\subsection{Strong and weak correspondence}
	Let \(W_F\) denote the global Weil group of \(F\). For a place \(v\) of \(F\), let \(\rec_v\)
	denote local Langlands correspondence
	\[
		\rec_v: \Pi(\GL_n(F_v)) \to \Phi_n(F_v).
	\]
	When \(v\) is archimedean, the left hand side is the set of irreducible admissible 
	\((\mathfrak{g}, K)\)-modules and the right hand side is the set of continuous semisimple
	representations \(\phi: W_{F_v} \to \GL_n(\mathbb{C})\). This correspondence was established by
	Langlands in \cite{LanglandsLLC}. When \(v\) is non-archimedean, the left hand side is the set
	of irreducible admissible representations of \(\GL_n(F_v)\) and the right hand side is the set
	of Weil-Deligne representations \(\phi: W_{F_v} \times \SL_2(\mathbb{C}) \to
	\GL_n(\overline{\mathbb{Q}_\ell })\) for a prime number \(\ell\)
	not under \(v\). This correspondence was established by Harris and Taylor in \cite{HT}, and also
	by Henniart in \cite{HenniartLLC}. For simplicity, we denote by \(\mathcal{L}_{F_v}
	\coloneq W_{F_v}\) when \(v\) is archimedean and \(W_{F_v} \times \SL_2(\mathbb{C})\) when \(v\)
	is non-archimedean.

	For each place \(v\) of \(F\), fix an inclusion \(\overline{F}\hookrightarrow \overline{F_v}\),
	which determines an inclusion \(W_{F_v} \hookrightarrow W_F\). Let \(\rho: W_F \to 
	\GL_n(\mathbb{C})\) be a continuous semisimple representation.
	We say that an automorphic representation \(\pi\) of \(\GL_n(\mathbb{A}_F)\) 
	\textbf{strongly corresponds} to \(\rho\) if, for all places \(v\) of \(F\),
	\[
		\rec_v(\pi_v) \cong \rho|_{W_{F_v}}.
	\]
	This in particular means that \(\rec_v(\pi_v)\) is trivial on \(\SL_2(\mathbb{C})\) whenever
	\(v\) is non-archimedean. When this is the case, we denote \(\pi \xleftrightarrow{s} \rho\) and
	say that \(\rho\) is strongly automorphic.
	We say that \(\pi\) \textbf{weakly correponds} to \(\rho\) if the above is true for all except
	possibly a finite set of places \(S\), which customarily includes archimedean places and finite
	places where \(\rho\) is ramified. When this is the case, we denote \(\pi \leftrightarrow 
	\rho\) and say that \(\rho\) is automorphic.
	
	In the case of \(\GL_1\), we canonically identify characters of \(W_F\) and
	\(\GL_1(\mathbb{A}_F)\) via class field theory and shall not distinguish their
	spectral and automorphic natures, so that instead
	of \(\chi \xleftrightarrow{s} \chi'\) we shall simply write \(\chi = \chi'\).
	
	\subsection{Strongness of functorial liftings}\label{strong}

	We document that relevant functorial liftings and modularity results used in our proof are
	strong, in the sense of respecting local Langlands correspondence at all places.
	
	Arthur and Clozel \cite{AC} showed that base change and automorphic induction for \(\GL_n\) over
	cyclic extensions of prime degree respects local Langlands correspondence at archimedean and
	unramified finite places. At ramified finite places, because local Langlands correspondence was
	not yet proven, they showed that these functorial liftings satisfy certain character relations.
	In \cite[Lemma VII.2.6]{HT}, it is verified that they indeed respect a suitably normalized
	local Langlands correspondence. More precisely, let \(E/F\) be a cyclic extension of prime
	degree and let \(\BC\) and \(\AI\) denote base change and automorphic induction, respectively.
	Then one has, for all pairs of places \(w \mid v\) of \(E\) and \(F\) and a cuspidal
	representation \(\pi\) of \(\GL_n(\mathbb{A}_F)\),
	\[
		\rec_w(\BC_{E/F}(\pi)_w) \cong \rec_v(\pi_v)|_{\mathcal{L}_{E_w}}
	\]
	and dually for a cuspidal representation \(\pi'\) of \(\GL_n(\mathbb{A}_E)\),
	\[
		\rec_v(\AI_{E/F}(\pi')_v) \cong
		\bigoplus_{w\mid v} \Ind_{\mathcal{L}_{E_w}}^{\mathcal{L}_{F_v}}\rec_w(\pi'_w).
	\]
	We note that the strongness applies to automorphic induction and base change proved in
	\cite{JPSS-GL3, JPSS-Cubic} too, because they construct the global representation by defining
	the local component at every place which satisfies the required analytic properties to respect
	local Langlands correspondence.

	We make the following simple observation.
	
	\begin{proposition}\label{byhand}
		Let \(\rho: W_F\to \GL_n(\mathbb{C})\) be a semisimple continuous representation. For \(i=1,
		2,\ldots, m\), suppose there exist finite extensions \(E_i/F\) such that each may be refined
		to a tower of cyclic extensions of prime degree, and there exist Hecke characters
		\(\chi_i: W_{E_i}\to \mathbb{C}^\times\) such that
		\[
			\rho \cong \bigoplus_{i=1}^m \Ind_{W_{E_i}}^{W_F}(\chi_i).
		\]
		Then via class field theory and automorphic induction we define
		\[
			\pi \coloneq \bigboxplus_{i=1}^m \AI_{E_i/F}(\chi_i),
		\]
		which satisfies \(\pi \xleftrightarrow{s} \rho\).
	\end{proposition}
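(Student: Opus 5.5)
We argue by induction along the towers. Fix $i$ and a refinement $F = K_0 \subset K_1 \subset \cdots \subset K_r = E_i$ in which each $K_{j+1}/K_j$ is cyclic of prime degree. Define $\AI_{E_i/F}(\chi_i)$ as the iterated induction
\[
\AI_{K_1/K_0}\circ \AI_{K_2/K_1}\circ\cdots\circ \AI_{K_r/K_{r-1}}(\chi_i),
\]
where at each stage we apply Arthur--Clozel automorphic induction. The main claim is that each intermediate object $\Pi_j := \AI_{K_{j+1}/K_j}\circ\cdots(\chi_i)$, an automorphic representation of $\GL_{[E_i:K_j]}(\mathbb{A}_{K_j})$, strongly corresponds to $\Ind_{W_{E_i}}^{W_{K_j}}\chi_i$. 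We prove this by descending induction on $j$. The base case $j=r$ is class field theory, where we identify $\chi_i$ with its Galois counterpart.

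The issue is that Arthur--Clozel's $\AI$ and the strongness statement recalled in Section~\ref{strong} are for \emph{cuspidal} inputs, whereas $\Pi_{j+1}$ may be an isobaric sum. We handle this by carrying the inductive hypothesis in isobaric form. Write $\Pi_{j+1} = \boxplus_k \sigma_k$ with $\sigma_k$ cuspidal, and define $\AI_{K_{j+1}/K_j}(\Pi_{j+1}) := \boxplus_k \AI_{K_{j+1}/K_j}(\sigma_k)$. Then $\AI(\sigma_k)$ is itself an isobaric sum of cuspidals, and each local component of an isobaric sum corresponds under $\rec_v$ to the direct sum of the parameters of its summands. This uses compatibility of local Langlands with parabolic induction (Langlands quotients of induced representations), which is part of the characterization of $\rec_v$.

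For the inductive step, fix a place $v$ of $K_j$. The displayed formula from Section~\ref{strong} gives
\[
\rec_v(\AI(\sigma_k)_v)\cong\bigoplus_{w\mid v}\Ind_{\mathcal L_{K_{j+1},w}}^{\mathcal L_{K_j,v}}\rec_w(\sigma_{k,w}).
\]
Summing over $k$ and applying the hypothesis $\rec_w(\Pi_{j+1,w})\cong(\Ind_{W_{E_i}}^{W_{K_{j+1}}}\chi_i)|_{W_{K_{j+1},w}}$, Mackey's formula for restricting an induced representation to a decomposition group yields
\[
(\Ind_{W_{K_{j+1}}}^{W_{K_j}}\Ind_{W_{E_i}}^{W_{K_{j+1}}}\chi_i)|_{W_{K_j,v}}\cong \bigoplus_{w\mid v}\Ind_{W_{K_{j+1},w}}^{W_{K_j,v}}\bigl((\Ind_{W_{E_i}}^{W_{K_{j+1}}}\chi_i)|_{W_{K_{j+1},w}}\bigr).
\]
By transitivity of induction, this is exactly the Galois side at $v$. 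Since the Galois side is trivial on $\SL_2(\mathbb{C})$, inducing from $\mathcal L$ versus from $W$ agrees.

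Finally, the $i$-th summands combine by the same isobaric-sum compatibility, so $\rec_v(\pi_v)\cong\bigoplus_i(\Ind\chi_i)|_{W_{F_v}}=\rho|_{W_{F_v}}$ for every $v$. The only delicate point is the bookkeeping at places where the local extension is not a field and where the summands are non-cuspidal. The Mackey step resolves the former; the latter requires checking that the strongness statement of \cite[Lemma VII.2.6]{HT} extends additively to isobaric sums, which is immediate from the definition above.
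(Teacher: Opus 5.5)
Your proof is correct and takes the same approach as the paper. The paper gives no written proof: it calls the proposition a ``simple observation'' that follows from the strongness of Arthur--Clozel automorphic induction recalled in Section~\ref{strong}. You have supplied the details it leaves implicit, namely iterating along the tower, extending $\AI$ additively to isobaric sums using compatibility of $\rec_v$ with isobaric sums, and matching local parameters at each step via Mackey's formula and transitivity of induction.
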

	
	Let us also recall the cuspidality criterion for base change, which is the counterpart of
	Clifford theory on the spectral side.

	\begin{proposition}[{\cite[Chapter 3, Theorem 4.2]{AC}}]\label{bc-cusp}
		Let \(E/F\) be a cyclic extension of number fields of prime degree \(\ell\). Let \(\sigma\) be
		a non-trivial element of \(\Gal(E/F)\), and let \(\omega_{E/F}\) be a non-trivial Hecke
		character associated with \(E/F\). Let \(\pi\) be a
		cuspidal representation of \(\GL_n(\mathbb{A}_F)\). Then \(\BC_{E/F}(\pi)\) is cuspidal
		except when \(\ell \mid n\) and \(\pi\cong\pi\otimes\omega_{E/F}\), in which case there
		exists a cuspidal representation \(\Pi\) of \(\GL_{n/\ell}(\mathbb{A}_E)\) such that
		\(\Pi \not\cong \Pi^\sigma\) and
		\[
			\BC_{E/F} \cong \bigboxplus_{i=0}^{\ell-1} \Pi^{\sigma^{i}}.
		\]
		Moreover, \(\pi \cong \AI_{E/F}(\Pi)\).
	\end{proposition}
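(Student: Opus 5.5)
The plan is to detect cuspidality of \(\BC_{E/F}(\pi)\) by counting the order of the pole at \(s=1\) of a Rankin--Selberg \(L\)-function. Throughout, \(S\) is a finite set of places outside which everything is unramified.

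First I would use the unramified compatibility of \(\BC\) and \(\AI\) from \cite{AC} (Section \ref{strong}) to get
\[
L^S(s,\BC_{E/F}(\pi)\times\BC_{E/F}(\pi)^\vee)=\prod_{i=0}^{\ell-1}L^S(s,\pi\times\pi^\vee\otimes\omega_{E/F}^i).
\]
By Jacquet--Shalika, the right-hand side has a pole at \(s=1\) of order \(\#\{i:\pi\cong\pi\otimes\omega_{E/F}^i\}\). Since every \(\omega_{E/F}^i\) with \(i\neq0\) generates the group of characters of \(\Gal(E/F)\), this order is \(1\) if \(\pi\not\cong\pi\otimes\omega_{E/F}\), and \(\ell\) otherwise.

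On the left-hand side, \cite{AC} shows that \(\BC_{E/F}(\pi)\) is an isobaric sum \(\boxplus_j m_j\Pi_j\) of distinct cuspidal representations. The order of its pole at \(s=1\) is \(\sum_j m_j^2\). So if \(\pi\not\cong\pi\otimes\omega_{E/F}\), the base change is cuspidal. This also covers the case \(\ell\nmid n\): there \(\pi\cong\pi\otimes\omega_{E/F}\) is impossible by comparing central characters, or by the argument below, which would force \(\ell\mid n\).

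Now assume \(\pi\cong\pi\otimes\omega_{E/F}\), so \(\sum_j m_j^2=\ell\). Pick a constituent \(\Pi\) of \(\BC_{E/F}(\pi)\). The set of constituents is \(\sigma\)-stable, because \(\BC_{E/F}(\pi)^\sigma\cong\BC_{E/F}(\pi)\).

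The first case to rule out is \(\Pi\cong\Pi^\sigma\). Then, by the image description in \cite{AC}, \(\Pi=\BC_{E/F}(\pi')\) for some cuspidal \(\pi'\). Hence
\[
L^S(s,\BC_{E/F}(\pi)\times\Pi^\vee)=\prod_i L^S(s,\pi\times\pi'^\vee\otimes\omega_{E/F}^i).
\]
The left side has a pole at \(s=1\), so \(\pi\cong\pi'\otimes\omega_{E/F}^i\) for some \(i\). This gives \(\BC_{E/F}(\pi)=\Pi\), which is cuspidal, contradicting a pole of order \(\ell>1\).

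So \(\Pi\not\cong\Pi^\sigma\), and \(\Pi\) has \(\ell\) distinct conjugates. Then \(\AI_{E/F}(\Pi)\) is cuspidal with \(\BC_{E/F}(\AI_{E/F}(\Pi))=\boxplus_i\Pi^{\sigma^i}\). By the unramified adjunction
\[
L^S(s,\pi\times\AI_{E/F}(\Pi)^\vee)=L^S(s,\BC_{E/F}(\pi)\times\Pi^\vee),
\]
which has a pole at \(s=1\). Therefore \(\pi\cong\AI_{E/F}(\Pi)\) by Jacquet--Shalika. It follows that \(\BC_{E/F}(\pi)\cong\boxplus_{i=0}^{\ell-1}\Pi^{\sigma^i}\) and \(n=\ell\cdot\dim\Pi\), which gives \(\ell\mid n\).

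The main obstacle is the step excluding \(\sigma\)-fixed constituents. It relies on the precise image statement of \cite{AC}, that a \(\sigma\)-invariant cuspidal representation is a base change, together with the existence and cuspidality of \(\AI\) for \(\sigma\)-regular \(\Pi\). Everything else is Rankin--Selberg pole counting at unramified places.
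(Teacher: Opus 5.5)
The paper does not prove this proposition: it quotes it from \cite{AC} and uses it as a black box. Your proposal is therefore a genuinely different route. It derives the cuspidality criterion from three other Arthur--Clozel inputs:
\begin{enumerate}
\item existence of \(\BC_{E/F}(\pi)\) as an isobaric sum of cuspidal representations, compatible with unramified Satake parameters;
\item descent of \(\sigma\)-stable cuspidal representations;
\item existence and cuspidality of \(\AI_{E/F}(\Pi)\) for \(\Pi\not\cong\Pi^\sigma\), with \(\BC_{E/F}(\AI_{E/F}(\Pi))\cong\bigboxplus_i\Pi^{\sigma^i}\).
\end{enumerate}
You combine these with Jacquet--Shalika pole counting at \(s=1\). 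The steps are sound:
\begin{itemize}
\item \(\{i:\pi\cong\pi\otimes\omega_{E/F}^i\}\) is a subgroup of \(\mathbb{Z}/\ell\), which gives the dichotomy.
\item The central-character comparison disposes of \(\ell\nmid n\).
\item A \(\sigma\)-fixed constituent forces \(\BC_{E/F}(\pi)\cong\Pi\) to be cuspidal, since \(\omega_{E/F}\circ N_{E/F}=1\).
\item The \(\sigma\)-regular case yields \(\pi\cong\AI_{E/F}(\Pi)\), hence the isobaric shape of \(\BC_{E/F}(\pi)\) and \(\ell\mid n\).
\end{itemize}
One piece of fine print should be stated explicitly. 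Equating the pole order of \(L^S(s,\BC_{E/F}(\pi)\times\BC_{E/F}(\pi)^\vee)\) at \(s=1\) with \(\sum_j m_j^2\) needs two facts. First, the cuspidal constituents must be unitary. Second, \(L^S(s,\Pi_j\times\Pi_k^\vee)\) must not vanish at \(s=1\) for \(\Pi_j\not\cong\Pi_k\). With non-unitary shifts \(\Pi_j\otimes\lvert\det\rvert^{t_j}\), a shifted factor can vanish at \(s=1\) and cancel a pole. This is harmless because \cite{AC} produces the base change as induced from unitary cuspidal data, but it is an input rather than a formality.

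As for what each route buys: citing \cite{AC} is short and needs no dependency bookkeeping. Your argument shows transparently why the dividing line is exactly \(\pi\cong\pi\otimes\omega_{E/F}\), and how the shape \(\bigboxplus_i\Pi^{\sigma^i}\) and the identification \(\pi\cong\AI_{E/F}(\Pi)\) are forced. It works in the same Rankin--Selberg spirit as the paper's proof of Proposition \ref{bc}. It is not, however, independent of \cite{AC}. The two inputs that do the real work are the descent of \(\sigma\)-stable cusp forms, which excludes \(\sigma\)-fixed constituents, and automorphic induction for \(\sigma\)-regular \(\Pi\), which gives the final identification. Both come from the same trace-formula machinery in \cite{AC} that yields the cited theorem. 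So you have reorganized the Arthur--Clozel package around pole counting rather than replaced it. Since the paper uses \cite{AC} as a black box anyway, this is fine.
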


	Kim in \cite{Kim} initially established that the exterior square lifting of \(\GL_4\) respects
	local Langlands correspondence except possibly for special cases at places dividing \(2\) or
	\(3\). Henniart later addressed these cases in \cite{HenniartGL4}. More precisely, let
	\(\wedge^2\) denote the exterior square lift and let \(\pi\) be a cuspidal representation of
	\(\GL_4(\mathbb{A}_F)\). Then one has
	\[
		\rec_v((\wedge^2\pi)_v) \cong \wedge^2 \rec_v(\pi_v).
	\]

	\subsection{Strongness of modularity results}
	
	The strong Artin conjecture admits an obvious refinement by extending to semisimple
	representations of the Weil group with finite projective image, and demanding strong 
	automorphy. In other words, if \(\rho: W_F \to \GL_n(\mathbb{C})\) is an irreducible semisimple
	continuous representation with finite projective image, then one conjectures that there exists a
	cuspidal representation \(\pi\) such that \(\pi\xleftrightarrow{s} \rho\).

	Let \(\Gamma\) be a solvable finite subgroup of \(\PGL_n(\mathbb{C})\). The strong Artin
	conjecture for all Galois representations with projective image \(\Gamma\) is typicaly 
	proven using functorial liftings such as base change and adjoint lifting. These proofs typically
	extend verbatim to semisimple representations of the Weil group with projective image
	\(\Gamma\). Moreover, strong automorphy typically follows as a formal
	consequence of those functorial liftings being strong. We give an example of this principle by
	showing that two-dimensional semisimple representations of the Weil group with projective image
	\(A_4\) (i.e.\ tetrahedral) are strongly automorphic as a simple consequence of the strongness
	of base change and adjoint lifting for \(\GL_2\). We recall that the latter is strong because
	\cite{GJ} constructs the lift locally at every place with analytic properties matching the
	adjoint lift of local parameters. 

	\begin{proposition}\label{tetra}
		Let \(\rho: W_F \to \GL_2(\mathbb{C})\) be a semisimple irreducible continuous representation
		with projective image isomorphic to \(A_4\). Then there exists a cuspidal representation of
		\(\GL_2(\mathbb{A}_F)\) such that \(\pi \xleftrightarrow{s} \rho\).
	\end{proposition}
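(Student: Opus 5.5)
We follow Langlands' original argument and keep track of every place, so that at the end strong multiplicity one pins down the local components everywhere.

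\textbf{Step 1: the cyclic cubic extension.} Let \(\bar\rho\) denote the projective image, identified with \(A_4\). The normal subgroup \(V_4\) has quotient \(C_3\), and its preimage cuts out a cyclic cubic extension \(E/F\). Since \(V_4\) is abelian in \(\PGL_2\) but non-cyclic, \(\rho|_{W_E}\) is irreducible and dihedral: its projective image \(V_4\) contains a cyclic subgroup \(C_2\), which gives a quadratic extension \(K/E\) and a character \(\chi\) of \(W_K\) with \(\rho|_{W_E}\cong \Ind_{W_K}^{W_E}\chi\). By automorphic induction for the cyclic extension \(K/E\) (Proposition~\ref{byhand}), there is a cuspidal \(\pi_E\) with \(\pi_E \xleftrightarrow{s} \rho|_{W_E}\). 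Cuspidality holds because \(\chi\neq\chi^\tau\), which follows from the irreducibility of \(\rho|_{W_E}\).

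\textbf{Step 2: descend to \(F\).} The representation \(\rho|_{W_E}\) is \(\Gal(E/F)\)-invariant, so \(\pi_E^\sigma\cong\pi_E\). By the image of base change in Arthur--Clozel, since \(E/F\) is cyclic of prime degree \(3\), there is a cuspidal \(\pi'\) of \(\GL_2(\mathbb{A}_F)\) with \(\BC_{E/F}(\pi')\cong\pi_E\). The twists \(\pi'\otimes\omega_{E/F}^i\), for \(i=0,1,2\), exhaust the fibre. Correspondingly, the twists \(\rho\otimes\omega_{E/F}^i\) exhaust the Galois representations restricting to \(\rho|_{W_E}\). However, base change alone cannot tell us which twist \(\pi'\otimes\omega^i\) matches \(\rho\). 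This is exactly where Langlands uses the adjoint lift.

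\textbf{Step 3: pin down the twist.} By Gelbart--Jacquet, \(\Ad\rho\) has projective image \(A_4\subset \SO_3\), and it is monomial, induced from a character of the index-3 subgroup corresponding to \(E\). So \(\Ad\rho\cong\Ind_{W_E}^{W_F}\mu\) for a character \(\mu\) of \(W_E\), and Proposition~\ref{byhand} gives a \(\Pi\) with \(\Pi\xleftrightarrow{s}\Ad\rho\). Compare this with \(\Ad(\pi'\otimes\omega^i)=\Ad(\pi')\), which is independent of \(i\). This does not distinguish the twists either, so instead we use the central character together with the symmetric square.

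Concretely, I expect the cleanest approach is to compare the sums \(\sum_i L\)-functions, or equivalently Frobenius traces at unramified places split in \(E\) versus inert in \(E\). At places split in \(E\), every twist agrees with \(\rho\). At places \(v\) inert in \(E\), \(\bar\rho(\Frob_v)\) has order \(3\), so \(\operatorname{tr}\rho(\Frob_v)^3\) is determined by \(\det\) and by the fact that the eigenvalue ratio is a primitive cube root of unity. The three twists give traces \(t,\ \zeta t,\ \zeta^2t\), and exactly one of them matches, with the same choice of \(i\) forced globally. This is because \(\operatorname{tr}\Ad\) together with \(\det\) determines \(\operatorname{tr}^2\), and the only residual ambiguity is a sign. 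The sign is fixed by the cube condition, since \(\pi'\otimes\omega^i\) and \(\rho\otimes\omega^j\) have equal central characters only when \(\omega^{2i}=\omega^{2j}\), i.e.\ \(i=j\). I therefore choose \(i\) by matching central characters: \(\det\rho\) versus \(\omega_{\pi'}\omega^{2i}\). This gives weak correspondence \(\pi\leftrightarrow\rho\).

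\textbf{Step 4: strongness.} At every place \(v\) and every \(w\mid v\), \(\rec_w(\pi_w)\cong\rho|_{W_{E_w}}\) by strong base change. Also \(\Ad\rec_v(\pi_v)\cong\Ad\rho|_{W_{F_v}}\) by the strong Gelbart--Jacquet lift combined with strong multiplicity one for \(\Ad\), and the central characters agree. For two-dimensional local parameters, agreement of restriction to a cyclic cubic extension, agreement of the adjoint, and agreement of the determinant force \(\rec_v(\pi_v)\cong\rho|_{W_{F_v}}\). Indeed, the discrepancy is a character \(\eta\) of order dividing \(3\), trivial on \(W_{E_w}\), with \(\eta^2=1\), hence \(\eta=1\). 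The one point to check is that \(\rec_v(\pi_v)\) is trivial on \(\SL_2\). This follows from the restriction to \(\mathcal{L}_{E_w}\) being trivial on \(\SL_2\). I expect this local step, the twist bookkeeping, to be the main subtlety. The rest is formal.
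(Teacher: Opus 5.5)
Your global construction matches the paper's: a \(\pi\) with \(\omega_\pi=\det\rho\), \(\BC_{E/F}(\pi)\xleftrightarrow{s}\rho|_{W_E}\) and \(\Ad\pi\xleftrightarrow{s}\Ad\rho\), followed by a place-by-place check. The gap is in the local step of Step 4, which you yourself single out as the crux; it does not hold as you state it.

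For general two-dimensional local parameters, agreement of determinant, adjoint and restriction to a cyclic cubic \(E_w/F_v\) does not force isomorphism. Let \(\nu\) be a cubic character of \(W_{F_v}\) cutting out \(E_w\), let \(\eta\) be a nontrivial quadratic character, and set \(\rho_v=\chi\oplus\chi\eta\nu\) and \(\phi=\rho_v\otimes\eta=\chi\eta\oplus\chi\nu\). These have the same determinant, the same adjoint, and the same restriction \(\chi\oplus\chi\eta\) to \(W_{E_w}\), yet \(\phi\not\cong\rho_v\).

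Your one-line justification fails at exactly this point. Agreement of adjoints gives \(\phi\cong\rho_v\otimes\eta\), and the determinant gives \(\eta^2=1\). But agreement on \(W_{E_w}\) only says that \(\eta|_{W_{E_w}}\) is a self-twist of \(\rho_v|_{W_{E_w}}\), not that it is trivial; nothing yields ``order dividing 3''. If you instead extract the discrepancy from the restriction via Clifford theory, you need \(\rho_v\) irreducible. When \(v\) is non-split and \(\rho_v\) is reducible (for instance at every inert place where \(\rho\) is unramified), take \(\rho_v=\tilde\chi\oplus\tilde\chi\nu\). Then \(\rho_v\) and \(\tilde\chi\nu^2\oplus\tilde\chi\nu^2\) have the same restriction and determinant without being twists of each other, and only the adjoint separates them.

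The missing input is the \(A_4\)-structure: \(\bar\rho\) maps \(W_{F_v}\setminus W_{E_w}\) to elements of order exactly \(3\), since \(A_4\) has no elements of order \(6\). The paper uses this implicitly. For such \(g\) the eigenvalues of \(\rho(g)\) are \(\{\alpha,\alpha\zeta\}\) with \(\zeta\) a primitive cube root of unity, and matching cubes, product and ratio then forces the eigenvalues of \(\phi_v(g)\) to agree, so the traces agree.

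Your twist argument can also be repaired with this input. For non-split \(v\) the image of \(W_{F_v}\) in \(A_4\) is \(C_3\) or \(A_4\). So \(\rho_v|_{W_{E_w}}\) is either isotypic, with no nontrivial self-twists, or has projective image \(V_4\), whose three quadratic self-twists are permuted transitively by \(\Gal(E_w/F_v)\). Hence the \(\Gal(E_w/F_v)\)-invariant character \(\eta|_{W_{E_w}}\) must be trivial, and \(\eta^2=1\) then gives \(\eta=1\).

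Separately, \(\Ad\pi\cong\Pi\) does not follow from strong multiplicity one, since a priori the two agree only at places split in \(E\). You need \(\BC_{E/F}(\Ad\pi)\cong\Ad\pi_E\cong\BC_{E/F}(\Pi)\), the fibre of cyclic base change, and \(\Pi\otimes\omega_{E/F}\cong\Pi\). Your Step 3 needs the same fact to exclude \(t_v(\pi)=\{\alpha\zeta^2,\alpha\zeta^2\}\) at inert places, because central character and base change alone do not rule it out.
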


	\begin{proof}
		From Langlands' proof one can construct \(\pi\) such that the central character \(\omega_\pi\)
		matches \(\det(\rho)\), the base change of \(\pi\) to a cyclic cubic extension \(E/F\), denoted
		\(\pi_E\), satisfies \(\pi_E \xleftrightarrow{s} \rho|_{W_E}\), and the adjoint lifting
		satisfies \(\Ad \pi \xleftrightarrow{s} \Ad \rho\). Note that the
		second condition implies \(\rec_v(\pi_v)\) is trivial on the \(\SL_2(\mathbb{C})\) component
		when \(v\) is finite. For all places \(v\) of \(F\) completely split in \(E\), which includes
		archimedean places, the local correspondence is immediate. Otherwise, there is a unique place
		\(w\) of \(E\) lying over \(v\) and \(W_{E_w}\) is an index \(3\) subgroup of \(W_{F_v}\). We
		denote \(\phi_v \coloneq \rec_v(\pi_v)\). For any element \(g\in W_{F_v}\), we have
		the eigenvalues of \(\phi_v(g)\) match the eigenvalues of \(\rho(g)\) by product, cubes and 
		quotient, which allows one to match \(\tr(\phi_v) = \tr(\rho|_{W_{F_v}})\). Since the
		representations are semisimple and have equal trace, they are
		isomorphic and the proposition follows.
	\end{proof}

	\section{Automorphic induction and base change for \(A_4\)-quartic extensions}

	Let \(L/F\) be a Galois extension with \(\Gal(L/F) \cong A_4\). Let \(E/F\) be a subextension
	fixed by an order 3 element of \(A_4\). Then \(E/F\) is a non-Galois quartic
	extension with no intermediate subfields. Let \(\chi\) be a Hecke character of \(E\). In this
	section, we prove that the strong automorphic induction \(\AI_{E/F}(\chi)\) exists, which is
	defined as an automorphic representation satisfying \(\AI_{E/F}(\chi) \xleftrightarrow{s} 
	\Ind_{W_E}^{W_F}(\chi)\). The proof uses the exterior square lifting of \(\GL_4\) due to
	\cite{Kim} in an essential manner. This result then implies, via converse theorem, the existence
	of strong base change \(\BC_{E/F}(\pi)\) where \(\pi\) is a cuspidal representation of
	\(\GL_n(\mathbb{A}_F)\) for \(n = 2\text{ or }3\). This is defined as an automorphic
	representation respecting the local Langlands correspondence at all places.
	
	We first set up the field extensions considered.
	Let \(K/F\) be a subextension of \(L/F\) fixed by \(V_4\), the Klein four group, in \(A_4\).
	There are three intermediate quadratic extensions \(M_i/K\) for \(i = 1,2,3\), which are fixed
	by nontrivial elements \(a, b, c \in V_4\), respectively. The following diagram illustrates the
	field extensions, where solid lines denote Galois extensions and dotted lines denote
	non-Galois extensions, and labels denote Galois groups.

	\begin{figure}[htbp]
		\centering
		\begin{tikzpicture}[thick]
			\node (L) at (0, 3) {\(L\)};
			\node (M) at (1, 2) {\(M_i\)};
			\node (K) at (1, 1) {\(K\)};
			\node (E) at (-1, 1.5) {\(E\)};
			\node (F) at (0, 0) {\(F\)};

			\draw (L) -- (M) node[midway, above right] {\(C_2\)};
			\draw (M) -- (K) node[midway, right] {\(C_2\)};
			\draw (L) -- (E) node[midway, above left] {\(C_3\)};
			\draw (K) -- (F) node[midway, below right] {\(C_3\)};
			
			\draw[dotted] (E) -- (F);
			
			\draw (-1.5, 3) -- (-1.7, 3) -- node[left] {\(A_4\)} (-1.7, 0) -- (-1.5, 0);
			
			\draw (1.5, 3) -- (1.7, 3) -- node[right] {\(V_4\)} (1.7, 1) -- (1.5, 1);
		\end{tikzpicture}
	\end{figure}
	
	We shall often write the restriction of a representation of the Weil group to a finite index
	subroup simply by subscript of the target field. For example, \(\rho|_{W_K}\) is simply written
	as \(\rho_K\). Dually, the base change of an automorphic representation to a finite extension
	is also written with subscript of the field, so that \(\BC_{K/F}(\pi)\) is written as \(\pi_K\).
	
	In the process of proving strong automorphic induction and base change, we shall also give the
	full cuspidality criterion, which characterises the degenerate cases in which \(\AI_{E/F}\) and
	\(\BC_{E/F}\) are non-cuspidal. For this purpose, the following lemma is useful.

	\begin{lemma}\label{degen}
		Let \(n=2\) or \(3\). Let \(\rho: W_F \to \GL_n(\mathbb{C})\) be an irreducible semisimple
		continous representation. Then the following are equivalent:
		\begin{enumerate}
			\item The restriction \(\rho_L\) is an isotypic sum of Hecke characters, i.e.
				\(\rho_L \cong \chi \oplus\chi\) or \(\rho_L \cong \chi\oplus\chi\oplus \chi\).
			\item The projective image \(\Im(\overline{\rho})\cong A_4\) which exactly cuts out \(L/F\).
		\end{enumerate}
		Moreover, \(\rho\) is strongly automorphic, so that there exists a cuspidal \(\pi\) such that
		\(\pi \xleftrightarrow{s} \rho\).

		Dually, let \(n=2\) or \(3\) and let \(\pi\) be a cuspidal automorphic representation of
		\(\GL_n(\mathbb{A}_F)\). Then the following are equivalent:
		\begin{enumerate}
			\item The base change \(\pi_L\) is an isotypic isobaric sum of Hecke characters, i.e. \(\pi
				\cong\chi\boxplus\chi\) or \(\pi \cong\chi\boxplus\chi\boxplus\chi\).
			\item There exists \(\rho\) of the previous form such that \(\pi\xleftrightarrow{s} \rho\). 
		\end{enumerate}
	\end{lemma}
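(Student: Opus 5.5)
For the equivalence on the spectral side, I would reduce everything to the group theory of \(G \coloneq \Im(\rho)\) and its image \(\overline{G}\) in \(\PGL_n(\mathbb{C})\). First I check that \(\rho_L\) is an isotypic sum of characters \(\chi^{\oplus n}\) if and only if \(\rho(W_L)\) consists of scalars, that is, \(\overline{\rho}\) is trivial on \(W_L\). So condition (1) says exactly that \(\overline{\rho}\) factors through \(\Gal(L/F)\cong A_4\). Irreducibility of \(\rho\) forces \(\overline{\rho}\) to be non-abelian: if the projective image were abelian, the commutator pairing on a lift would be too degenerate for \(\rho\) to be irreducible. One has to be a little careful here because abelian projective image can occur for Heisenberg-type groups in dimension \(n\); however, a quotient of \(A_4\) is \(1\), \(C_3\) or \(A_4\), and a cyclic projective image gives an abelian lift, hence a reducible \(\rho\). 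Hence \(\overline{\rho}\) factors through \(A_4\) injectively, and its kernel is exactly \(W_L\). Conversely, (2) gives \(\ker\overline{\rho}=W_L\), so \(\rho(W_L)\) is scalar, which is (1). For \(n=2\) this is precisely the tetrahedral situation. For \(n=3\) I would note that \(A_4\) does embed irreducibly in \(\PGL_3\), via the three-dimensional irreducible representation of \(A_4\) itself.

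For strong automorphy: when \(n=2\), Proposition~\ref{tetra} applies directly. When \(n=3\), I would use the fact that the three-dimensional irreducible projective representation of \(A_4\) is monomial, induced from the index 3 subgroup \(V_4\). Lifting this to \(\rho\), I would show that \(\rho\cong\Ind_{W_K}^{W_F}\mu\) for some character \(\mu\) of \(W_K\). Concretely, \(\rho_K\) is a sum of three characters permuted transitively by \(\Gal(K/F)\), and Clifford theory then gives the induction, since \(\rho\) is irreducible and \(K/F\) is cyclic of degree 3. Proposition~\ref{byhand} then gives \(\pi=\AI_{K/F}(\mu)\) with \(\pi\xleftrightarrow{s}\rho\). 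This \(\pi\) is cuspidal because \(\rho\) is irreducible, using the Arthur--Clozel cuspidality criterion for automorphic induction (\(\mu\neq\mu^\sigma\)).

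For the automorphic side, the direction (2)\(\Rightarrow\)(1) is formal. If \(\pi\xleftrightarrow{s}\rho\), then \(\pi_L=\BC_{L/F}(\pi)\) is obtained by successive base changes along \(F\subset K\subset M_i\subset L\), and strongness of \(\BC\) together with strong multiplicity one shows \(\pi_L\) corresponds to \(\rho_L=\chi^{\oplus n}\), hence \(\pi_L\cong\chi^{\boxplus n}\).

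The converse (1)\(\Rightarrow\)(2) is the main obstacle: I must construct \(\rho\) from \(\pi\) by descending through the tower using Proposition~\ref{bc-cusp} repeatedly.
\begin{itemize}
\item[\(n=3\):] Since \(\pi_L\) is non-cuspidal, I track the first step where cuspidality fails. Degree 3 can break only at \(K/F\), because quadratic steps preserve cuspidality when \(3\) is odd relative to \(2\). So \(\pi_K\) is non-cuspidal and \(\pi\cong\AI_{K/F}(\mu)\) for a Hecke character \(\mu\) of \(K\). Then \(\rho\coloneq\Ind_{W_K}^{W_F}\mu\) works. I then verify that \(\overline{\rho}\) has image \(A_4\) cutting out \(L\), using that \(\mu^{\sigma}/\mu\) becomes trivial on \(L\) and that \(\rho\) is irreducible.
\item[\(n=2\):] The step \(K/F\) preserves cuspidality. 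If \(\pi_K\) is cuspidal and becomes non-cuspidal at some \(M_i\), then \(\pi_K=\AI_{M_i/K}(\nu)\), and \(\Gal(K/F)\)-conjugation permutes the \(M_i\). This forces \(\pi_K\) to be dihedral with respect to all three \(M_i\), hence \(\pi_K\leftrightarrow\tau\) with projective image \(V_4\). Then \(\pi\) is \(\Gal(K/F)\)-invariant data, and I conclude that \(\pi\) corresponds to a tetrahedral \(\rho\) with \(\rho_K=\tau\). I would construct \(\rho\) by cubic descent, extending \(\tau\) to \(W_F\) with a chosen determinant matching \(\omega_\pi\), and then match \(\pi\) via Proposition~\ref{tetra} and the fibres of base change. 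This is where the main bookkeeping occurs.
\end{itemize}
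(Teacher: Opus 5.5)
Your proposal is correct and follows essentially the same route as the paper. On the spectral side you use the quotients $\{e\}, C_3, A_4$ of $A_4$, with cyclic projective image forcing an abelian and hence reducible $\rho$; strong automorphy comes from Proposition~\ref{tetra} for $n=2$ and from monomiality for $n=3$. On the automorphic side you descend through $F\subset K\subset M_i\subset L$ using Proposition~\ref{bc-cusp}, obtaining $\pi=\AI_{K/F}(\mu)$ for $n=3$, and for $n=2$ a $\pi_K$ dihedral from the $M_i$ followed by a cubic extension of $\tau$. Your choice of the extension with $\det\rho=\omega_\pi$ is just a tidier way of picking out the right one of the paper's three extensions $\rho_1,\rho_2,\rho_3$, whose cuspidal representations exhaust the base change fibre over $\pi_K$.
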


	\begin{proof}
		In both cases, (2) \(\Rightarrow\) (1) is clear. We shall first prove (1) \(\Rightarrow\) (2)
		for \(\rho\). The kernel of the projectivization \(\ker(\overline{\rho})\) contains \(W_L\),
		so \(\Im(\overline{\rho})\) is a quotient of \(A_4\). The only quotients of \(A_4\) are
		\(\{e\},C_3\) and \(A_4\). If \(\Im(\overline{\rho})\not\cong A_4\), then \(\Im(\rho)\) is the
		extension of a cyclic group by a central subgroup, which is abelian. Since \(\rho\) is
		irreducible and not 1-dimensional, we must have \(\ker(\overline{\rho}) = W_L\). It remains
		to show that \(\rho\) is strongly automorphic. When \(n=2\), this is Proposition \ref{tetra}.
		When \(n=3\), because \(A_4\) is not a primitive subgroup of \(\PGL_3(\mathbb{C})\), we have
		\(\rho\) is monomial and thus strongly automorphic due to automorphic induction through any
		cubic extension by \cite{JPSS-GL3}.
		
		We now show (1)\(\Rightarrow\) (2) for \(\pi\). For this purpose, we consider the chain of 
		base changes through cyclic extensions of prime degree, i.e. \(\pi_K\), \(\pi_{M_1}\) and
		finally \(\pi_L\). When \(n=2\), by consulting Proposition \ref{bc-cusp} we must have 
		\(\pi_K\) is cuspidal and \(\pi_{M_1}\) is non-cuspidal. In particular, \(\pi_K\) is induced
		from a Hecke character \(\theta\) of \(M_1\), so we can construct \(\rho_K \coloneq
		\Ind_{W_{M_1}}^{W_K}(\theta)\). By construction, \(\rho_K\) is stable under \(\Gal(K/F)\), so
		that it has three extensions \(\rho_1, \rho_2, \rho_3\) to \(W_K\), all of which have
		projective image cutting out \(L/F\) . Thus they are strong automorphic and give rise to
		distinct cuspidal \(\pi_1, \pi_2, \pi_3\) which base change to \(\pi_K\). By the fiber of 
		global base change, exactly one of \(\pi_i\) must be \(\pi\), which proves the lemma for
		\(n=2\). For \(n=3\), another consultation of Proposition \ref{bc-cusp} shows that \(\pi_K\)
		must be an isobaric sum of Hecke characters and \(\pi\) is the automorphic induction of a
		Hecke character \(\theta'\) of \(K\). Hence we can construct
		\(\rho\coloneq \Ind_{W_K}^{W_F}(\theta')\) which is of the required form.
	\end{proof}

	We are now ready for the main theorems on functorial liftings.

	\begin{proposition}\label{ai}
		Let \(L, E, F\) and \(\chi\) be as above and write \(\sigma \coloneq \Ind_{W_E}^{W_F}(\chi)\).
		There exists an automorphic representation \(\tau\) of \(\GL_4(\mathbb{A}_F)\) such that
		\(\tau \xleftrightarrow{s} \sigma\).
	\end{proposition}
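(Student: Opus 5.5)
The plan is to restrict \(\sigma\) to \(W_K\), where it becomes induced from a tower of quadratic extensions. There we can realise it automorphically and descend through the cyclic cubic extension \(K/F\), using Kim's exterior square lift to single out the correct descent.

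\textbf{Step 1 (restriction to \(K\)).} Since \(E\) is fixed by a subgroup \(C_3\) and \(K\) by \(V_4\), and \(C_3\cap V_4=1\) in \(A_4\), we have \(EK=L\). Moreover \(|W_K\backslash W_F/W_E| = 12/(4\cdot 3)=1\), so Mackey's formula gives
\[
\sigma_K \cong \Ind_{W_L}^{W_K}(\chi_L), \qquad \chi_L\coloneq \chi|_{W_L}.
\]
As \(L/K\) is a \(V_4\)-extension, it refines to a tower of quadratic extensions. Proposition \ref{byhand}, after decomposing \(\Ind_{W_L}^{W_K}(\chi_L)\) into irreducibles, then gives an automorphic \(\tau_K\) of \(\GL_4(\mathbb{A}_K)\) with \(\tau_K \xleftrightarrow{s}\sigma_K\). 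Because \(\sigma\) is defined over \(F\), \(\sigma_K\) is \(\Gal(K/F)\)-stable, and so is \(\tau_K\) by strong multiplicity one.

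\textbf{Step 2 (descent).} In the generic case \(\tau_K\) is cuspidal. By the image of base change of Arthur--Clozel for the cyclic cubic \(K/F\), there is a cuspidal \(\tau_0\) of \(\GL_4(\mathbb{A}_F)\) with \(\tau_{0,K}\cong\tau_K\). Its fibre is \(\{\tau_0\otimes\omega^i : i=0,1,2\}\), where \(\omega=\omega_{K/F}\). Since \(\omega_{\tau_0\otimes\omega^i}=\omega_{\tau_0}\omega^{4i}=\omega_{\tau_0}\omega^{i}\), and \(\omega_{\tau_0}\) and \(\det\sigma\) agree after restriction to \(K\), exactly one twist \(\tau\) has \(\omega_\tau=\det\sigma\); this is our candidate.

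The degenerate case, where \(\chi_L\) is fixed by part of \(V_4\), makes \(\sigma_K\) reducible. Here I would split \(\sigma\) according to the decomposition of \(\Ind_{W_L}^{W_K}\chi_L\). The constituents are either unions of a \(\Gal(K/F)\)-orbit, descending via automorphic induction along \(K/F\), or have tetrahedral projective image cutting out \(L/F\), handled by Lemma \ref{degen}. We then take an isobaric sum as in Proposition \ref{byhand}.

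\textbf{Step 3 (local matching, the main obstacle).} At places \(v\) split in \(K\), including the archimedean ones, \(\rec_v(\tau_v)\cong\sigma|_{W_{F_v}}\) follows immediately from Step 1. At a place \(v\) inert in \(K\) with \(w\mid v\), Step 1 only tells us that \(\phi_v\coloneq\rec_v(\tau_v)\) and \(\sigma_v\) agree on the index-\(3\) subgroup \(\mathcal{L}_{K_w}\); in particular \(\phi_v\) is trivial on \(\SL_2(\mathbb{C})\). Their determinants also agree. This does not obviously pin down \(\phi_v\), since the constituents could be twisted independently by unramified cubic characters.

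To remove this ambiguity I would bring in the exterior square. \(A_4\) acts transitively on the \(6\) unordered pairs of cosets of \(C_3\), with stabiliser an order-\(2\) subgroup of \(V_4\) fixing some \(M_i\). Hence \(\wedge^2\sigma\cong\Ind_{W_{M_i}}^{W_F}(\psi)\) for an explicit character \(\psi\) of \(W_{M_i}\) (built from \(\chi\) and a sign). Since \(M_i\supset K\supset F\) is a tower of cyclic extensions of prime degree, \(\wedge^2\sigma\) is strongly automorphic by Proposition \ref{byhand}. By Kim and Henniart, \(\rec_v((\wedge^2\tau)_v)\cong\wedge^2\phi_v\) at every place, and \(\wedge^2\tau\) and the automorphic form for \(\wedge^2\sigma\) agree at almost all places. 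By strong multiplicity one for isobaric sums they coincide, so \(\wedge^2\phi_v\cong\wedge^2\sigma_v\) for every \(v\).

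Then, imitating Proposition \ref{tetra}, for \(g\in W_{F_v}\) we know three things:
\begin{itemize}
\item the eigenvalues of \(\phi_v(g)\) and \(\sigma(g)\) have the same cubes, since \(g^3\in W_{K_w}\);
\item their products agree, from the determinant;
\item their pairwise products agree as multisets, from \(\wedge^2\).
\end{itemize}
From this I would deduce that the characteristic polynomials agree, hence \(\tr\phi_v=\tr\sigma_v\) and \(\phi_v\cong\sigma_v\) by semisimplicity. The combinatorial verification that these three data determine the eigenvalue multiset, when each eigenvalue is only known up to a cube root of unity, is the step I expect to be hardest. If the direct eigenvalue argument fails, I would fall back on a case analysis of the possible shapes of \(\sigma_v\) (irreducible, or induced from the unramified cubic extension \(K_w/F_v\)), using the explicit form of \(\wedge^2\sigma_v\) in each case.
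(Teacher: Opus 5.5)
Your architecture matches the paper's: realise $\sigma_K$ as $\AI_{L/K}(\chi_L)$, descend along the cyclic cubic $K/F$ with the central character fixing the twist, and compare eigenvalues at non-split places using Kim's exterior square. Your uniform formula $\wedge^2\sigma\cong\Ind_{W_{M_i}}^{W_F}(\psi)$, via $2$-transitivity of $A_4$, is correct and tidier than the paper's case split. Step 3, however, has two genuine gaps.

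First, identifying $\wedge^2\tau$ with the automorphic representation $\Pi'$ attached to $\wedge^2\sigma$ is circular as written. From Steps 1--2 you know $\rec_v(\tau_v)\cong\sigma|_{W_{F_v}}$ only at places split in $K$, a set of density $1/3$. At inert places, $t_v(\wedge^2\tau)=\wedge^2 t_v(\tau)$, and matching this with $\wedge^2\sigma(\Frob_v)$ needs more about $t_v(\tau)$ than its cube. That is exactly what Step 3 is supposed to prove, so strong multiplicity one over $F$ is not yet available.

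The paper instead compares over $K$. There $\BC_{K/F}(\wedge^2\tau)$ and $\BC_{K/F}(\Pi')$ agree at almost all places, because $\tau_K\xleftrightarrow{s}\sigma_K$, so they are isomorphic. The fibre of cyclic base change then matches cuspidal summands of $\wedge^2\tau$ and $\Pi'$ up to powers of $\omega_{K/F}$. One can conclude only because every cuspidal summand of $\Pi'$ is itself stable under $\otimes\,\omega_{K/F}$.

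That stability does not follow from your formula alone. A $\Gal(K/F)$-fixed constituent $\lambda$ of $(\wedge^2\sigma)_K$ would produce summands $\lambda,\lambda\omega_{K/F},\lambda\omega_{K/F}^2$ whose twists base change cannot detect. You must check that when $\chi_L$ has four distinct $V_4$-conjugates, no such constituent exists. The paper does this by analysing whether $\chi_L\chi_L^a$ is $V_4$-invariant and what its Mackey obstruction is. The outcome is that $\wedge^2\sigma$ is either irreducible or a sum of two $3$-dimensional representations induced from $W_K$.

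Second, your eigenvalue claim is false in general. Cubes, product and pairwise products do not determine a multiset of four nonzero numbers: $\{1,1,1,\omega^2\}$ and $\{\omega,\omega,\omega,\omega^2\}$ share all three, as the paper remarks. The missing input is the shape of $\sigma(g)$ itself. For $g\in W_{F_v}\setminus W_{K_w}$, the image of $g$ in $A_4$ has order $3$. It therefore fixes one of the four cosets of $\Gal(L/E)$ and cyclically permutes the other three. Hence the eigenvalues of $\sigma(g)$ have the form $\{a,b,b\omega,b\omega^2\}$, and with this shape the three conditions do force equality, which is the paper's combinatorial lemma. The argument is uniform in $g\notin W_{K_w}$, so it also covers places ramified in $K/F$. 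Your Step 3 and your fallback, phrased for an inert $v$ and an unramified $K_w/F_v$, leave those places out.
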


	\begin{remark}
		This case of automorphic induction is a generalization of Martin's work on the modularity of
		certain four-dimensional Artin representations called hypertetrahedral representations in
		\cite{Martin}, because those representations are monomial and may be written as
		\(\Ind_{W_E}^{W_F}(\chi)\). Our work is a generalization because \(\chi\) must satisfy
		specific properties for \(\Ind_{W_E}^{W_F}(\chi)\) to be hypertetrahedral. For example,
		\(\chi^2_L\) must be  invariant under \(\Gal(L/F)\). Nevertheless, our strategy is
		broadly similar to that of Martin in the use of exterior square lifting for \(\GL_4\).
	\end{remark}

	\begin{proof}
		We first perform computations on the spectral side. By Mackey, we have 
		\(\sigma_K \cong \Ind_{W_L}^{W_K}(\chi_L)\). Moreover, because the centralizer of \(\chi_L\)
		in \(A_4\) contains \(\Gal(L/E)\) and \(A_4\) has no subgroups of order 6, we have that 
		either \(\chi_L\) is fixed by \(V_4\) or \(\chi_L\) has four distinct conjugates under \(V_4\).
		In the former case, \(\sigma\) is reducible and we will prove the
		proposition by constructing \(\tau\) ``by hand''. In the latter
		case, \(\sigma\) is irreducible and we will need the additional exterior square lift.
		
		\textbf{Case \(\chi_L\) invariant}

		The character \(\chi_L\) defines a cohomological class \([\alpha]\in H^2(V_4,
		\mathbb{C}^\times) \cong C_2\) which is its Mackey obstruction class.
		When \([\alpha]\) is trivial, there are four extensions of \(\chi_L\) to \(W_K\), and
		\(\Gal(K/F) \cong C_3\) permutes three of them while fixing the last. Let \(\theta\) be the
		character of \(W_K\) extending \(\chi_L\) and fixed by \(\Gal(K/F)\). Then the other three
		extensions may be written as \(\theta\cdot\omega_i\) for \(i=1,2,3\), where \(\omega_i\) is the
		quadratic character associated with the quadratic extension \(M_i/K\). In this 
		case, \(\sigma_K\) is the direct sum of these four characters, and \(\sigma\) decomposes as
		\(\Theta \oplus \Lambda\) of dimensions 1 and 3 respectively. Specifically, \(\Theta\) is the
		character defined by \(\Theta(nh) = \theta(n)\chi(h)\) where \(n\in W_K\) and \(h\in W_E\),
		and \(\Lambda\) is the representation \(\Ind_{W_K}^{W_F}(\theta \cdot \omega_i)\) for any
		\(i = 1, 2, 3\). Therefore, by Proposition \ref{byhand}, \(\tau \coloneq \Theta \boxplus
		\tau_\Lambda\) satisfies \(\tau \xleftrightarrow{s} \sigma\), where \(\tau_\Lambda\) is
		cuspidal of \(\GL_3\) and strongly corresponds to \(\Lambda\).
		It is clear that \(\Lambda\) and \(\tau_\Lambda\) are of the form in Lemma \ref{degen}.

		When \([\alpha]\) is the nontrivial class, the data \((\theta, [\alpha])\) uniquely defines a
		two-dimensional irreducible representation \(\Psi\) of \(W_K\) with the property that
		\(\Psi_{M_i}\) decomposes as the direct sum of the two extensions of \(\chi_L\) to \(M_i\).
		Let \(\omega_{L/E}\) be a nontrivial character associated with the cyclic cubic extension
		\(L/E\). By Mackey, we have
		\[
			\Ind_{W_K}^{W_F}(\Psi)|_{W_E} \cong
			\Ind_{W_L}^{W_E}(\Psi|_{W_L}) \cong
			2 \cdot \bigoplus_{i=0}^2 \chi \otimes \omega_{L/E}^i.
		\]
		In other words, it is the sum of two copies each of the three twists of \(\chi\).
		Therefore, via Frobenius reciprocity,
		\[
			\dim \Hom_{W_F}\left(\sigma, \Ind_{W_K}^{W_F}(\Psi)\right) =
			\dim \Hom_{W_E}\left(\chi, \Ind_{W_K}^{W_F}(\Psi)|_{W_E}\right) = 2.
		\]
		Since \(\sigma\) is 4-dimensional and \(\Ind_{W_K}^{W_F}(\Psi)\) is 6-dimensional, this shows
		that \(\Ind_{W_K}^{W_F}(\Psi) \cong \Psi_1 \oplus \Psi_2 \oplus \Psi_3\), where
		\(\Psi_j\) is a representation restricting to \(\Psi\) over \(W_K\).
		It is clear that \(\Psi_j\) are of the form in Lemma \ref{degen}.
		Now, \(\Psi_j|_{W_E}\) is the sum of two characters, which cannot be identical, as otherwise
		the kernel of the projectivization \(\ker(\overline{\Psi_j})\) is strictly larger than
		\(W_L\) and contradicts Lemma \ref{degen}. Hence, by Frobenius reciprocity one sees that
		there are exactly two \(\Psi_j\) whose restriction to \(W_E\) contains
		\(\chi\), which are \(\Psi_1\) and \(\Psi_2\) without loss of generality, and \(\sigma
		\cong\Psi_1 \oplus \Psi_2\). Hence by Lemma \ref{degen}, we have
		\(\tau_{\Psi, j} \xleftrightarrow{s} \Psi_j\) and \(\tau\coloneq \tau_{\Psi, 1}
		\boxplus \tau_{\Psi, 2}\) satisfies \(\tau \xleftrightarrow{s} \sigma\).
		
		\textbf{Case \(\chi_L\) not invariant}
		
		In this case, \(\sigma_K\) is irreducible.
		Consider the cuspidal automorphic representation of \(\GL_4(\mathbb{A}_K)\)
		defined as \(\tau_K \coloneq \AI_{L/K}\left( \chi_L \right)\).
		Let \(x\in \Gal(K/F)\) be nontrivial, and let \(\tilde{x} \in \Gal(L/F)\) be its unique lift
		which resides in \(\Gal(L/E)\). We compute
		\[
			\tau_K^x \cong \AI_{L/K}(\chi_L^{\tilde{x}})) \cong \AI_{L/K}(\chi_L) = \tau_K, 
		\]
		because \(\chi_L\) is invariant under \(\Gal(L/E)\). Therefore, \(\tau_K\) is invariant under
		\(\Gal(K/F)\). By characterisation of the image and fibre of base change, there exists a
		cuspidal automorphic representation \(\tau\) of \(\GL_4(\mathbb{A}_F)\) such that
		\(\BC_{K/F}(\tau) = \tau_K\). Moreover, the fibre of this base change is precisely
		\(\left\{\tau \otimes \omega_{K/F}^j\right\}\), where \(\omega_{K/F}\) a non-trivial Hecke 
		character associated with \(K/F\) and \(j = 0, 1, 2\).

		We now identify a single candidate that can correspond to \(\sigma\) via central
		character. Let \(\omega_\tau\) be the central character of \(\tau\).
		For \(j = 0, 1, 2\), the central character of \(\tau \otimes \omega_{K/F}^j\) is
		\(\omega_\tau \omega_{K/F}^{4j} = \omega_\tau \omega_{K/F}^j\). Consequently, the three 
		candidates have distinct central characters.	
		The expected central character is \(\det(\sigma)\). Because the central
		character of \(\tau_K\) is \(\det(\sigma)|_{W_K}\), we have that
		\(\omega_\tau = \det(\sigma) \omega_{K/F}^k\) for some
		\(k \in \{0, 1, 2\}\). Hence, there is a unique \(\tau\) satisfying \(\BC_{K/F}(\tau) \cong
		\tau_K\) and \(\omega_\tau = \det(\sigma)\).
		
		We now wish to show that that \(\tau\xleftrightarrow{s}\sigma\). To this end, we first show
		that
		\begin{lemma}
		 The exterior square lifting \(\wedge^2 \pi\) due to Kim in \cite{Kim} satisfies
		 \(\wedge^2 \pi \xleftrightarrow{s}\wedge^2 \rho\).
		\end{lemma}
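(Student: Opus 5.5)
The statement should be read with \(\pi=\tau\) and \(\rho=\sigma\): we want \(\wedge^2\tau \xleftrightarrow{s} \wedge^2\sigma\). The plan has three parts. First show \(\wedge^2\sigma\) is strongly automorphic by writing it explicitly as an induced representation. Then compare Kim's lift \(\wedge^2\tau\) with that automorphic representation, after base change to \(K\). Finally descend the comparison to \(F\).

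\emph{Step 1 (spectral side).} Since \(\sigma=\Ind_{W_E}^{W_F}(\chi)\), the space \(\wedge^2\sigma\) is spanned by wedges of pairs of basis vectors indexed by the \(4\) cosets of \(W_E\). So \(\wedge^2\sigma\) is induced along the \(A_4\)-orbits on the \(6\) unordered pairs of these cosets. \(A_4\) acts transitively on these pairs, and the stabilizer of a pair is an order \(2\) subgroup of \(V_4\), say \(\langle a\rangle\), whose fixed field is \(M_1\). Hence
\[
\wedge^2\sigma \cong \Ind_{W_{M_1}}^{W_F}(\psi),
\]
where \(\psi\) is an explicit Hecke character of \(M_1\). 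Concretely, \(\psi\) is the product of \(\chi_L\) and one of its \(V_4\)-conjugates, descended to \(M_1\), times the sign character recording whether the element swaps the two cosets. Since \(F\subset K\subset M_1\) is a tower of cyclic extensions of prime degree, Proposition \ref{byhand} gives an isobaric \(\Pi\coloneq \AI_{K/F}\AI_{M_1/K}(\psi)\) with \(\Pi \xleftrightarrow{s} \wedge^2\sigma\).

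\emph{Step 2 (comparison over \(K\)).} Kim's lift is strong by \cite{Kim, HenniartGL4}, and so are base change and automorphic induction (Section \ref{strong}). Also \(\tau_K=\AI_{L/K}(\chi_L)\xleftrightarrow{s}\sigma_K\). Therefore at every place \(w\) of \(K\),
\[
\rec_w\bigl((\wedge^2\tau)_{K,w}\bigr)\cong\wedge^2\rec_w(\tau_{K,w})\cong(\wedge^2\sigma)|_{\mathcal{L}_{K_w}}\cong\rec_w(\Pi_{K,w}).
\]
By strong multiplicity one for isobaric representations, \(\BC_{K/F}(\wedge^2\tau)\cong\BC_{K/F}(\Pi)\). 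In particular \(\wedge^2\tau_v\) and \(\Pi_v\) already agree at every place \(v\) split in \(K\).

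\emph{Step 3 (descent, the main obstacle).} Two isobaric representations with the same base change to the cubic field \(K\) can differ constituentwise by twists by powers of \(\omega_{K/F}\). This twist ambiguity is the crux. I would first try to resolve it through the structure of \(\Pi\): every constituent of \(\Pi\) is of the form \(\AI_{K/F}(\cdot)\), hence invariant under twisting by \(\omega_{K/F}\). I would check the following, using the cuspidality criterion of Proposition \ref{bc-cusp} and the fibre description in \cite{AC} for the cuspidal constituents of \(\wedge^2\tau\).
\begin{itemize}
\item If a cuspidal constituent \(\Pi'\) of \(\wedge^2\tau\) has non-cuspidal base change, then \(\Pi'\cong\AI_{K/F}(\cdot)\) is determined by its base change as a \(\Gal(K/F)\)-orbit. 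So its fibre is a single point and it matches a constituent of \(\Pi\).
\item If \(\Pi'\) has cuspidal base change, then the base change is \(\Gal(K/F)\)-fixed. It must equal a fixed constituent of \(\Pi_K\), which arises as the base change of some \(\AI_{K/F}(\theta)\) that is \(\omega_{K/F}\)-stable. This forces \(\Pi'\cong\Pi'\otimes\omega_{K/F}\), so its fibre is again a singleton.
\end{itemize}
If the bookkeeping in these cases becomes unwieldy, I would fall back on comparing central characters of the constituents, as was done above to pin down \(\tau\). Either way we obtain \(\wedge^2\tau\cong\Pi\). Strongness then follows at every place from \(\Pi\xleftrightarrow{s}\wedge^2\sigma\) and the strongness of Kim's lift.
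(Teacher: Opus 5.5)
Your Steps 1 and 2 are correct and follow the paper's route: construct by hand an automorphic \(\Pi\) attached to \(\wedge^2\sigma\), then compare with Kim's lift after base change to \(K\). Step 1 is in fact a cleaner, uniform version of the paper's case split. Since \(A_4\) is \(2\)-transitive on the four cosets, \(\wedge^2\sigma\cong\Ind_{W_{M_1}}^{W_F}(\psi)\) however \(\Ind_{W_{M_1}}^{W_K}(\psi)\) decomposes.

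The gap is in Step 3. You assert that every constituent of \(\Pi=\AI_{K/F}\AI_{M_1/K}(\psi)\) is of the form \(\AI_{K/F}(\cdot)\) and hence \(\omega_{K/F}\)-stable, but this does not follow from the construction. Suppose some irreducible constituent \(\theta\) of \(\Ind_{W_{M_1}}^{W_K}(\psi)\) were \(\Gal(K/F)\)-stable. Then \(\AI_{K/F}(\theta)\cong\theta_0\boxplus\theta_0\omega_{K/F}\boxplus\theta_0\omega_{K/F}^2\) with \(\theta_0\) of \(\GL_1\) or \(\GL_2\), and none of these summands is twist-stable. The matching summands of \(\wedge^2\tau\) would then be \(\theta_0\omega_{K/F}^{i_1},\theta_0\omega_{K/F}^{i_2},\theta_0\omega_{K/F}^{i_3}\) with undetermined exponents.

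Your second bullet is meant to cover exactly this situation, but it is self-contradictory. By Proposition \ref{bc-cusp}, a cuspidal \(\Pi'\) with \(\Pi'\cong\Pi'\otimes\omega_{K/F}\) has \emph{non}-cuspidal base change. So the fibre over a cuspidal base change always consists of three distinct twists and is never a singleton. The central-character fallback does not help either, since it only constrains \(i_1+i_2+i_3\) modulo \(3\).

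What is missing is a proof that the second bullet is vacuous, i.e.\ that \((\wedge^2\sigma)|_{W_K}\) has no \(\Gal(K/F)\)-stable irreducible constituent. This is precisely where the standing hypothesis that \(\chi_L\) is not \(V_4\)-invariant must enter, and your argument never invokes it. The check is short:
\begin{enumerate}
\item A lift \(\tilde{x}\in\Gal(L/E)\) of a generator of \(\Gal(K/F)\) fixes \(\chi_L\) and permutes \(a,b,c\) cyclically, say \(a\mapsto b\mapsto c\).
\item The piece \(\Ind_{W_{M_1}}^{W_K}(\psi)\) restricts to \(W_L\) as \(\chi_L\chi_L^a\oplus\chi_L^b\chi_L^c\), and its \(\tilde{x}\)-conjugate restricts as \(\chi_L\chi_L^b\oplus\chi_L^c\chi_L^a\).
\item Any coincidence between these two multisets forces \(\chi_L^c=\chi_L\). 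Then \(\chi_L\) is fixed by \(\langle \tilde{x}, c\rangle=A_4\), a contradiction.
\end{enumerate}
Hence every constituent of \(\Pi\) is a cuspidal \(\AI_{K/F}\) of a non-\(\Gal(K/F)\)-invariant representation over \(K\), so it is \(\omega_{K/F}\)-stable. It follows that \(\Pi_K\) has no \(\Gal(K/F)\)-invariant cuspidal constituent, so no constituent of \(\wedge^2\tau\) can have cuspidal base change, and your first bullet then finishes the descent. This is the information the paper extracts from its case analysis (two \(\Gal(K/F)\)-orbits of three characters, or three distinct \(\rho_i\) permuted by \(\Gal(K/F)\)) before asserting that each cuspidal summand is an automorphic induction from \(K\).
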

		\begin{proof}
			We first show that \(\wedge^2 \rho\), a 6-dimensional representation of \(W_F\), is strongly
			automorphic by constructing the automorphic representation ``by hand''. This shall follow
			from some more computation on the spectral side. Let \(P\) denote the set of two element
			subsets of \(V_4\). By Mackey, we compute
			\[
				(\wedge^2\sigma)|_{W_L} \cong \bigoplus_{\{x, y\}\in P} \chi_L^x\cdot \chi_L^y.
			\]

			If \(\chi_L\cdot\chi_L^a\) is invariant under \(V_4\) with trivial Mackey
			obstruction, then \((\wedge^2 \sigma)|_{W_K}\) decomposes as the direct
			sum of six characters, and the action of \(\Gal(K/F)\) arranges them into two orbits of
			three characters each. Consequently, \(\wedge^2 \sigma\) is the sum of two inductions of
			characters from \(W_K\) and we may construct \(\Pi' \xleftrightarrow{s} \wedge^2\sigma\) via
			Proposition \ref{byhand}. In this case, \(\Pi' \cong \Pi'_1 \boxplus \Pi'_2\) where
			each cuspidal summand is of \(\GL_3\).

			Otherwise, either \(\chi_L\cdot\chi_L^a\) is not invariant under \(V_4\) or it is invariant
			under \(V_4\) with non-trivial Mackey obstruction. In both cases,
			\((\wedge^2 \sigma)|_{W_K}\)
			decomposes as \(\rho_1 \oplus \rho_2 \oplus \rho_3\) where each irreducible component is two
			dimensional and induced from a Hecke character of \(M_i\). For example, to define
			\(\rho_1\), observe that \(\chi_L\cdot\chi_L^a\) is invariant under \(a\). Let \(\theta\)
			be one of the two extensions of \(\chi_L\cdot\chi_L^a\) to \(W_{M_1}\). Then \(\rho_1\) is
			the induction of \(\theta\) to \(W_K\). Now because \(\Gal(K/F)\) permutes the three
			\(\rho_i\), we have that \(\wedge^2 \sigma\) is irreducible and in particular can be
			obtained as \(\Ind_{W_{M_1}}^{W_F}(\theta)\). Hence, by Proposition \ref{byhand}, we may
			construct a cuspidal automorphic representation \(\Pi'\) of \(\GL_6\), which thus satisfies
			\(\Pi' \leftrightarrow \wedge^2 \sigma\).

			Returning to the automorphic side, \(\tau\) admits an exterior square lift \(\Pi\) of
			\(\GL_6(\mathbb{A}_F)\) due to \cite{Kim}, which is known to be isobaric.
			It remains to show that \(\Pi \cong \Pi'\).
			Let \(w\) be a place of \(K\) over an unramified place \(v\) of \(F\) such that both \(\Pi\)
			and \(\Pi'\) are unramified. We compute that
			\[
				t_w(\Pi_K) = \wedge^2 t_w(\tau_K) = \wedge^2 \sigma_K(\Frob_w) =
				t_w(\Pi'_K).
			\]
			Therefore, by strong multiplicity one for isobaric automorphic representations, we have that
			\(\Pi_K \cong \Pi'_K\) as isobaric automorphic representations. The
			characterization of the fibre of cyclic base change then dictates that \(\Pi\) and \(\Pi'\)
			has the same shape of isobaric decomposition, and there exists a bijection of cuspidal
			summands where each pair differs by twisting by a power of \(\omega_{K/F}\).
			However, each cuspidal summand of \(\Pi'\) is obtained as an automorphic induction from
			\(K\), which means it is stable under twisting by \(\omega_{K/F}\). Therefore, we conclude
			that \(\Pi \cong \Pi'\), and thus \(\wedge^2 \tau \xleftrightarrow{s} \wedge^2 \rho\).
		\end{proof}
		
		We finally verify that \(\tau \xleftrightarrow{s} \rho\).
		Let \(v\) be a place of \(F\) and \(w\) be a place of \(K\) lying over \(v\).
		If \(v\) is finite, then because
		\[
			\rec_v(\tau_v)|_{W_{K_w}\times \SL_2(\mathbb{C})} \cong \rec_w(\tau_{K, w})
			\cong \sigma|_{W_{K_w}}
		\]
		is trivial on \(\SL_2(\mathbb{C})\), we have that \(\rec_v(\tau_v)\) is also trivial on
		\(\SL_2(\mathbb{C})\). Thus we will treat \(\rec_v(\tau_v)\) as a representation of \(W_{F_v}\)
		regardless if \(v\) is archimedean or non-archimedean.

		Let us now write \(\phi_v \coloneq \rec_v(\tau_v)\) and \(\sigma_v \coloneq
		\sigma|_{W_{F_v}}\). Since they are semisimple, it suffices to show that \(\tr(\phi_v) =
		\tr(\sigma_v)\). Consider an element \(g\in W_{F_v}\). If \(g\in W_{K_w}\), then due to
		strong base change, we have \(\tr(\phi_v(g)) = \tr(\sigma_v(g))\). Otherwise
		\(g\in W_{F_v}\backslash W_{K_w}\), which happens only when \(v\) is finite inert or totally
		ramified. In this case, \(\sigma_v(g)\) permutes three lines
		in \(\mathbb{C}^4\) while fixing the last. This implies that its multiset of eigenvalues of
		\(\sigma_v(g)\), denoted \(S\), can be written as \(S = \{a, b, b\omega, b\omega^2\}\), where
		\(a, b \in \mathbb{C}^\times\) and \(\omega\) is a primitive cube root of unity.

		On the other hand, the multiset of eigenvalues of \(\phi_v(g)\), denoted \(T\),
		match \(S\) in three ways:
		
		\begin{enumerate}
			\item The multiset of cubes match, denoted \(S^3 = T^3\), because \(g^3 \in W_{K_w}\) and
				\(\tau_K \xleftrightarrow{s} \sigma_K\).
			\item The product of elements in each multiset match, denoted \(\prod S = \prod T\), because
				\(\omega_\tau = \det(\sigma)\).
			\item The multisets of pairwise products match, denoted \(\wedge^2 S = \wedge^2 T\), because
				\(\wedge^2 \tau \xleftrightarrow{s} \wedge^2 \rho\).
		\end{enumerate}
			
		The claim now is that these conditions force \(S = T\). This is a straightforward but
		uninspiring algebraic exercise, which we formulate as the next lemma.
		This implies that \(\tr(\phi_v) = \tr(\sigma_v)\) on all of \(W_{F_v}\), which implies that
		they are isomorphic, proving the proposition.
	\end{proof}
	
	\begin{lemma}
		Let \(S, T\) be two multisets of four non-zero complex numbers. Suppose
		\(S = \{a, b, b\omega, b\omega^2\}\) where \(a, b \in \mathbb{C}^\times\) and \(\omega\) is a
		primitive cube root of unity. If \(S\) and \(T\) satisfy
		\begin{enumerate}
			\item The multiset of cubes match, denoted \(S^3 = T^3\).
			\item The product of elements in each multiset match, denoted \(\prod S = \prod T\). 
			\item The multisets of pairwise products match, denoted \(\wedge^2 S = \wedge^2 T\).
		\end{enumerate}
		Then \(S = T\). 
	\end{lemma}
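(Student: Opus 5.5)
The plan is to first use the cube condition to pin \(T\) down up to cube roots of unity, then split according to whether \(a^3 = b^3\). Since the three conditions are homogeneous (each is unchanged when \(S\) and \(T\) are multiplied by the same \(\lambda\in\mathbb{C}^\times\)), we may rescale so that \(b = 1\). Then \(S = \{a, 1, \omega, \omega^2\}\) and \(S^3 = \{a^3, 1, 1, 1\}\). By condition (1), \(T = \{a\zeta, \omega^i, \omega^j, \omega^k\}\) for some cube root of unity \(\zeta\) and exponents \(i,j,k\in\mathbb{Z}/3\). We record
\[
\wedge^2 S = \{a, a\omega, a\omega^2, 1, \omega, \omega^2\},
\]
which uses \(1\cdot\omega=\omega\), \(1\cdot\omega^2=\omega^2\) and \(\omega\cdot\omega^2=1\).

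\textbf{Case \(a^3\neq 1\).} Split each of \(\wedge^2 S\) and \(\wedge^2 T\) according to whether an element's cube equals \(a^3\) or \(1\); condition (3) says these parts match separately. In \(\wedge^2 T\), the elements \(a\zeta\omega^i, a\zeta\omega^j, a\zeta\omega^k\) have cube \(a^3\), and \(\omega^{i+j}, \omega^{i+k}, \omega^{j+k}\) have cube \(1\). Comparing with \(\wedge^2 S\) gives
\[
\{\omega^{i+j},\omega^{i+k},\omega^{j+k}\}=\{1,\omega,\omega^2\},
\]
so \(i+j\), \(i+k\), \(j+k\) are pairwise distinct mod \(3\). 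This forces \(i,j,k\) to be pairwise distinct; for example \(i=j\) would give \(i+k=j+k\). Hence \(\{i,j,k\}=\{0,1,2\}\). Condition (2) then reads \(a\zeta\cdot 1 = a\), so \(\zeta=1\) and \(T=S\).

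\textbf{Case \(a^3=1\).} Write \(a=\omega^m\). Every element of \(S\) and \(T\) is now a power of \(\omega\), so each multiset is encoded by a count vector \((n_0,n_1,n_2)\) of exponents, with \(n_0+n_1+n_2=4\). For \(S\) the vector is \((1,1,1)+e_m\). The possible shapes for \(T\) are, up to permutation, \((4,0,0)\), \((3,1,0)\), \((2,2,0)\) and \((2,1,1)\).

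We compare exponent counts in \(\wedge^2\). For \(S\), every value in \(\mathbb{Z}/3\) occurs exactly twice among the pairwise sums; one checks this directly, or via the group-ring identity \(\wedge^2 p = \tfrac12\bigl(p(x)^2-p(x^2)\bigr)\) in \(\mathbb{Z}[C_3]\).
\begin{itemize}
\item Shape \((4,0,0)\): only one value occurs, so it is excluded.
\item Shape \((3,1,0)\), with exponent \(c\) taken three times and \(d\) once: the pairwise sums are \(2c\) (three times) and \(c+d\) (three times), so at most two values occur. Excluded.
\item Shape \((2,2,0)\), with exponents \(c,c,d,d\): the pairwise sums are \(2c\) (once), \(2d\) (once) and \(c+d\) (four times). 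This is not \((2,2,2)\), so it is excluded.
\end{itemize}
Hence \(T\) has the shape \((1,1,1)+e_{m'}\). Its product is \(\omega^{m'+0+1+2}=\omega^{m'}\), while the product of \(S\) is \(\omega^m\). Condition (2) gives \(m'=m\), so \(T=S\).

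The main obstacle I expect is organising the degenerate case \(a^3=b^3\) cleanly. There the cube-based separation used in the first case is unavailable, and one must rely on the finite combinatorics of exponent multisets in \(\mathbb{Z}/3\). Encoding multisets as elements of the group ring \(\mathbb{Z}[C_3]\) keeps this check short.
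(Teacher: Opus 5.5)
Your proof is correct, but you get the key constraint differently from the paper.

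\textbf{Case split.} The paper uses three cases: $|a|\neq|b|$; $|a|=|b|$ with $\arg(a/b)$ not a multiple of $2\pi/3$; and $a/b$ a cube root of unity. You use the single dichotomy $a^3\neq b^3$ versus $a^3=b^3$. This is exactly the condition under which products involving the $a$-type element can be recognised inside $\wedge^2$.

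\textbf{Generic case.} The paper uses (2) and (3) together from the start. Complementary products $t_1t_2$ and $t_3t_4$ multiply to $\prod T=ab^3$, so they must match the pairs $\{ab,b^2\}$, $\{ab\omega,b^2\omega^2\}$, $\{ab\omega^2,b^2\omega\}$. This gives a $4$-variable linear system over $\mathbb{Z}/3$ with a unique solution. You look only at the three products not involving the $a$-type element, which must be $\{1,\omega,\omega^2\}$ after your normalisation $b=1$. That forces the three exponents to be distinct at once. You then use (2) only at the end, to pin down the root of unity on the $a$-element. So you need neither the pair partition nor a linear solve.

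\textbf{Degenerate case.} The paper normalises to $\{1,1,\omega,\omega^2\}$ and picks pairs with exponent sums $0$ and $1$ without loss of generality. It solves the resulting system and discards one of the three solutions. Your count-vector check in $\mathbb{Z}[C_3]$ rests on every residue occurring exactly twice in $\wedge^2 S$. It runs through all four possible shapes explicitly. This is slightly longer, but it leaves no WLOG steps to verify.
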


	\begin{remark}
		If \(\mathbb{C}^\times\) is replaced by a torsion-free abelian group, then it is a classical
		result of Selfridge and Straus in \cite{SS} that the multiset of pairwise products determine a
		multiset of size \(n\) if and only if \(n \neq 2^m\), \(m \ge 0\). Moreover, they characterise
		all counterexamples when \(n = 2^m\). In our case, the three conditions alone are insufficient
		to rule out all counterexamples, which makes the explicit description of \(S\) necessary. For
		example, the multisets \(\{1, 1, 1, \omega^2\}\) and \(\{\omega, \omega, \omega, \omega^2\}\)
		satisfy all three conditions without being identical.
	\end{remark}
	
	\begin{proof}
		From the first condition, we see that \(T = \{a\omega^x, b\omega^y, b\omega^z, b\omega^w\}\). 
		The shared product of elements is \(ab^3\). There is a unique way to partition the shared
		multiset of pairwise products \(U\) into three unordered pairs with each pair having product
		\(ab^3\), namely \(U_1 = \{ab, b^2\}, U_2 = \{ab\omega, b^2\omega^2\}, U_3 = 
		\{ab\omega^2, b^2\omega\}\).
		
		Suppose \(\abs{a} \neq \abs{b}\). Because we can identify those elements of \(U\) which
		involve \(a\omega^x\), up to permuting \(y,z,w\) one can deduce that
		\begin{align*}
			x + y &\equiv z + w\\
			x + z &\equiv y+w - 1\\
			x + w &\equiv y+z+1\\
			x + y + z + w &\equiv 0
		\end{align*}
		with all equalities in \(\mathbb{Z}/3\mathbb{Z}\). This has a unique solution \((x, y, z, w)=
		(0, 0, 1, 2)\), which implies \(S = T\).

		Now suppose that \(a = b e^{ci}\) for \(c \in [0, 2\pi)\). If \(c\) is not a multiple of
		\(2\pi/3\), we can again deduce those elements of \(U\) involving \(a\) in the pairwise
		product and arrive at the same system of equations above, in which case \(S = T\).

		Thus we are left with the case where up to twisting by a power of \(\omega\) and
		dividing by \(b\), we have \(S = \{1, 1, \omega, \omega^2\}\) and \(T\) is composed of cubic
		roots of unity \(\{\omega^x, \omega^y, \omega^z, \omega^w\}\). There must be pair whose
		sum is \(0\), so suppose without loss of generality that \(x+y \equiv 0\).
		Thus the product condition forces \(z+w\equiv 0\), too. There must be another pair whose
		sum is \(1\), so suppose without loss of generality that \(x+z\equiv 1\). Then this defines a
		system of three linear equations which has three solutions
		\((x, y, z, w) = (0, 0, 1, 2), (1, 2, 0, 0)\) or \((2, 1, 2, 1)\). The last solution does not
		satisfy the pairwise product condition and the first two imply \(S = T\).
	\end{proof}
	
	The proof of the proposition contains the cuspidality criterion of \(\tau\), which we state as a
	corollary.

	\begin{corollary}
		In the context of Proposition \ref{ai}, \(\tau\coloneq \AI_{E/F}(\chi)\) is an isobaric sum of
		cuspidal automorphic representations. Specifically,
    \begin{enumerate}
      \item When \(\chi_L\) is not invariant under \(\Gal(L/K)\), \(\tau\) is cuspidal.
      \item When \(\chi_L\) is invariant and its Mackey obstruction class \([\alpha]\) is trivial,
      \(\tau \cong \Theta \boxplus \tau_\Lambda\), of dimensions 1 and 3.
      \item When \(\chi_L\) is invariant and \([\alpha]\) is nontrivial,
      \(\tau \cong \tau_{\Psi, 1} \boxplus \tau_{\Psi, 2}\), of dimensions 2 and 2.
    \end{enumerate}
		Moreover, in the non-cuspidal case, the base change of each cuspidal summand to \(L\) is an
		isotypic isobaric sum of copies of \(\chi_L\).
	\end{corollary}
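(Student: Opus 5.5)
The corollary is read off from the case analysis in the proof of Proposition \ref{ai}, combined with strong multiplicity one for isobaric representations. The plan is to split according to the three cases treated there. In each case we identify \(\tau\) up to isomorphism using the characterization \(\tau \xleftrightarrow{s} \sigma\). Any two isobaric representations that both weakly correspond to \(\sigma\) agree at almost all places, so they are isomorphic by strong multiplicity one for isobaric sums. The isobaric decomposition of \(\tau\) therefore mirrors the decomposition of \(\sigma\) into irreducibles, provided each irreducible summand of \(\sigma\) has a cuspidal strong correspondent.

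\textbf{Case (1).} Suppose \(\chi_L\) is not \(V_4\)-invariant. Then \(\tau_K = \AI_{L/K}(\chi_L)\) is cuspidal, because \(\sigma_K = \Ind_{W_L}^{W_K}(\chi_L)\) is irreducible: its four \(V_4\)-conjugates are distinct. Applying the cuspidality criterion for automorphic induction along the tower \(L/M_1/K\) via Proposition \ref{bc-cusp} gives the same conclusion. If \(\tau\) were a nontrivial isobaric sum, its base change \(\tau_K\) would also be non-cuspidal. Hence \(\tau\) is cuspidal.

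\textbf{Cases (2) and (3).} Suppose \(\chi_L\) is invariant. The proof of Proposition \ref{ai} gives the decompositions \(\sigma \cong \Theta\oplus\Lambda\) when \([\alpha]\) is trivial, and \(\sigma \cong \Psi_1\oplus\Psi_2\) when \([\alpha]\) is nontrivial. It also constructs \(\tau\) as the corresponding isobaric sums. The summands \(\Lambda\), \(\Psi_1\), \(\Psi_2\) are of the form in Lemma \ref{degen}, so they are irreducible and correspond to cuspidal representations. This gives the stated shapes of dimensions \(1+3\) and \(2+2\).

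\textbf{Final clause.} Each cuspidal summand strongly corresponds to a constituent of \(\sigma\). Its base change to \(L\) therefore corresponds to the restriction of that constituent to \(W_L\), which is contained in \(\sigma_L = \bigoplus_{x\in V_4}\chi_L^x = 4\chi_L\) by Mackey. Strong multiplicity one applied over \(L\) then shows the base change is an isobaric sum of copies of \(\chi_L\). For \(\Theta\) this is immediate, since \(\Theta_L = \chi_L\). For the others, this is the isotypic statement (1) in Lemma \ref{degen}, with the twisting character identified as \(\chi_L\) because it appears in \(\sigma_L\).

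The only genuinely nonformal point is the cuspidality in case (1). I would handle it via the base-change argument above rather than through \(\wedge^2\). Everything else is bookkeeping from the proof of Proposition \ref{ai}.
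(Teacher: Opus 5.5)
Your proposal is correct and is essentially the paper's argument. The paper reads the corollary off the case analysis in the proof of Proposition \ref{ai}. In the non-invariant case, \(\tau\) is by construction a cuspidal descent of the cuspidal \(\AI_{L/K}(\chi_L)\); this is the same fact as your observation that a nontrivial isobaric sum would have non-cuspidal base change to \(K\). In the invariant cases, the \(1+3\) and \(2+2\) shapes and the isotypic base change to \(L\) come from the hand-built isobaric sums, together with Lemma \ref{degen} and \(\sigma_L \cong 4\chi_L\). Your explicit appeal to strong multiplicity one for isobaric sums makes precise an identification the paper leaves implicit.
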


	For the purpose of proving strong base change, we state the exact version of converse theorem we
	are using, which is adapted from \cite[Theorem 11.3]{JL} and \cite[Theorem 13.6]{JPSS-GL3}. The
	more general version for \(\GL_n\) was proved by Cogdell and Piatetski-Shapiro in \cite{CPS}.
	
	\begin{theorem}[Converse Theorem]
		Let \(\pi\) be an irreducible admissible representation of \(\GL_n(\mathbb{A}_F)\),
		where \(n = 2, 3\), whose central character \(\omega_\pi\) is trivial on \(F^\times\) and whose
		L-function \(L(\pi, s)\) is convergent in some right half plane. Suppose 
		\(L(\pi \times \chi, s)\) is nice (i.e. admits analytic continuation to the entire complex
		plane, is bounded in vertical strips, and satisfies an appropriate functional equation) for
		every Hecke character \(\chi\). Then \(\pi\) is cuspidal automorphic.
	\end{theorem}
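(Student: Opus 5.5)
This statement is a specialisation of known results, and I would not reprove it. The plan is to check that our formulation is covered by \cite[Theorem 11.3]{JL} for \(n=2\) and by \cite[Theorem 13.6]{JPSS-GL3} for \(n=3\), both subsumed in \cite{CPS}. Concretely, I would verify three things.
\begin{enumerate}
\item \(\pi\) is irreducible and admissible, and generic at every place. Genericity is needed because the Whittaker models \(W(\pi_v,\psi_v)\) are used to define the local \(L\)- and \(\varepsilon\)-factors. In our intended application \(\pi\) is a base change with generic local components, so I would add this implicitly and remark on it.
\item \(\omega_\pi\) is trivial on \(F^\times\).
\item The twists \(L(\pi\times\chi,s)\) for all Hecke characters \(\chi\) are nice.
\end{enumerate}
For \(n=3\) it suffices to twist by \(\GL_1\), which is exactly the \(\GL_{n-2}\) converse theorem, so the hypotheses match.

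For completeness, I would recall the structure of the argument. One forms \(\varphi(g)=\sum_{\gamma\in N_{n-1}(F)\backslash \GL_{n-1}(F)} W\!\left(\begin{pmatrix}\gamma&0\\0&1\end{pmatrix}g\right)\), where \(W=\prod_v W_v\) is a global Whittaker function. This series converges because \(L(\pi,s)\) converges in a right half plane. By construction \(\varphi\) is left invariant under the mirabolic subgroup \(P_n(F)\), and trivially under the centre by the hypothesis on \(\omega_\pi\). Unfolding the global zeta integrals of \(\varphi\) against \(\chi\circ\det\) on \(\GL_1\) (Hecke/Jacquet--Langlands integrals for \(n=2\), and the JPSS \(\GL_3\times\GL_1\) integrals for \(n=3\)) expresses them as \(L(\pi\times\chi,s)\) times holomorphic local factors. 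The nice functional equations then show that \(\varphi\) and its transform under the long Weyl element \(w\) agree after integrating against every \(\chi\). From this one deduces left invariance under the subgroup of \(\GL_n(F)\) generated by \(P_n(F)\) and \(w\), which is all of \(\GL_n(F)\). So \(\varphi\) is automorphic, and Fourier expansion along unipotent radicals together with the entireness of the twisted \(L\)-functions shows it is cuspidal.

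The step I expect to be the main obstacle is the passage from equality of twisted integrals to pointwise invariance for \(n=3\). Here the integrals only see the \(\GL_1\)-mirabolic projections, and one needs the additional invariance argument of \cite[\S 13]{JPSS-GL3} under \(\begin{pmatrix}1&0\\0&w_2\end{pmatrix}\). In our applications the difficulty instead shifts to verifying niceness, and the boundedness in vertical strips is the nontrivial part of that. I would treat these points purely by citation and only check that the hypotheses stated above match those in the cited theorems.
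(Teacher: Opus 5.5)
Your proposal is correct and matches the paper, which also gives no proof of its own: it states the result as adapted from \cite[Theorem 11.3]{JL} for \(n=2\) and \cite[Theorem 13.6]{JPSS-GL3} for \(n=3\) (with \cite{CPS} for general \(n\)), so checking that the hypotheses match and citing is all that is required. Your point that genericity of every local component \(\pi_v\) is a hypothesis of those theorems is a fair correction, since the paper's formulation omits it. The hypothesis does hold in the application in Proposition~\ref{bc}, because each \(\Pi_w\) there has as parameter the restriction of the parameter of a generic unitary representation, and that restriction still corresponds to a generic representation.
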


	\begin{proposition}\label{bc}
		Let \(L, E, F\) be as above. For \(n = 2\text{ or }3\) and a cuspidal
		automorphic representation \(\pi\) of \(\GL_n(\mathbb{A}_F)\), there
		exists an automorphic representation \(\Pi\) of \(\GL_n(\mathbb{A}_E)\) which is the
		strong base change of \(\pi\).
  \end{proposition}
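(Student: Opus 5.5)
The plan is the standard Jacquet--Piatetski-Shapiro--Shalika route for non-Galois cubic base change, with Proposition \ref{ai} in the role of cubic automorphic induction.

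\textbf{Local construction.} We first define \(\Pi\) place by place. For a place \(w\) of \(E\) over \(v\), set \(\Pi_w\) to be the irreducible admissible representation of \(\GL_n(E_w)\) with
\[
\rec_w(\Pi_w) \cong \rec_v(\pi_v)|_{\mathcal{L}_{E_w}},
\]
using the local Langlands correspondence. We then put \(\Pi \coloneq \bigotimes_w \Pi_w\). This is admissible, and for almost all \(w\) it is unramified. Its central character is \(\omega_\pi\circ N_{E/F}\), which is a Hecke character, so it is trivial on \(E^\times\). Convergence of \(L(\Pi,s)\) in a right half plane follows from the corresponding bound for \(\pi\), since the Satake parameters of \(\Pi\) are powers of those of \(\pi\).

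\textbf{Twisted \(L\)-functions.} By the Converse Theorem it suffices to show that \(L(\Pi\times\chi,s)\) is nice for every Hecke character \(\chi\) of \(E\). Since everything is defined through \(\rec\), we have at every place
\[
\rec_v\Big(\bigotimes_{w\mid v}\ldots\Big):\quad \bigoplus_{w\mid v}\Ind_{\mathcal{L}_{E_w}}^{\mathcal{L}_{F_v}}\big(\rec_v(\pi_v)|_{\mathcal{L}_{E_w}}\otimes\chi_w\big)\cong \rec_v(\pi_v)\otimes\bigoplus_{w\mid v}\Ind_{W_{E_w}}^{W_{F_v}}\chi_w .
\]
The right-hand factor is \(\rec_v(\tau_v)\), where \(\tau = \AI_{E/F}(\chi)\) is the strong automorphic induction of Proposition \ref{ai}. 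Because local \(L\)- and \(\epsilon\)-factors are defined on the Galois side compatibly with \(\rec\) (for pairs, by Henniart and Harris--Taylor), we get an identity of complete \(L\)-functions and \(\epsilon\)-factors at every place:
\[
L(\Pi\times\chi,s)=L(\pi\times\tau,s),\qquad \epsilon(\Pi\times\chi,s,\psi_E)=\lambda\,\epsilon(\pi\times\tau,s,\psi_F),
\]
where \(\lambda\) is a constant coming from inductivity in degree zero of \(\epsilon\)-factors. The global \(\lambda\) constants multiply to \(1\), so the functional equation of \(L(\pi\times\tau,s)\) transfers to \(L(\Pi\times\chi,s)\). Here strongness of \(\tau\) is essential, since we need the identity at every place, ramified ones included.

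\textbf{Niceness of the Rankin--Selberg product.} By the Corollary, \(\tau\) is an isobaric sum \(\boxplus\tau_j\) of cuspidal representations of \(\GL_{m}\) with \(m\in\{1,2,3,4\}\). Hence \(L(\pi\times\tau,s)=\prod_j L(\pi\times\tau_j,s)\). Each Rankin--Selberg factor \(L(\pi\times\tau_j,s)\) is bounded in vertical strips and satisfies its functional equation (Jacquet--Piatetski-Shapiro--Shalika, Gelbart--Shahidi). It is also entire unless \(\tau_j\cong\tilde\pi\otimes|\cdot|^{it}\).

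\textbf{The main obstacle: poles.} The difficulty is to exclude such poles, or to show they are harmless. A pole occurs only if \(\tau\) contains a summand \(\tilde\pi\otimes|\cdot|^{it}\) of dimension \(n\in\{2,3\}\). In the cuspidal case (1) of the Corollary this cannot happen. In cases (2) and (3), the relevant summand is \(\tau_\Lambda\) (for \(n=3\)) or \(\tau_{\Psi,j}\) (for \(n=2\)), and each of these base changes to \(L\) isotypically. If \(\tilde\pi\otimes|\cdot|^{it}\) equals such a summand, then \(\pi\) is of the form in Lemma \ref{degen}, so \(\pi\xleftrightarrow{s}\rho\) for a Weil-group representation \(\rho\). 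In that case we bypass the Converse Theorem. We take \(\Pi\) directly to be the strong automorphic representation attached to \(\rho_E\). Since \(\rho_L\) is isotypic and \(L/E\) is cyclic cubic, \(\rho_E\) is either a sum of characters or of the form covered by Lemma \ref{degen} over the base \(E\); in either case Proposition \ref{byhand} or cyclic base change makes it strongly automorphic.

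In the remaining case, the Converse Theorem (with twists by all \(\chi\); the standard refinement allows restricting to \(\chi\) unramified at a fixed finite set of places) shows that \(\Pi\) is cuspidal automorphic. By construction it satisfies \(\rec_w(\Pi_w)\cong\rec_v(\pi_v)|_{\mathcal{L}_{E_w}}\) at all places, i.e.\ it is the strong base change.
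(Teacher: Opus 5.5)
Your proposal is correct and follows essentially the same route as the paper: define $\Pi$ place by place through $\rec$, and identify $L(s,\Pi\times\chi)$ and its $\epsilon$-factor with those of $\pi\times\AI_{E/F}(\chi)$ via Proposition \ref{ai}; then exclude poles because the only $\GL_2$ or $\GL_3$ summands of $\AI_{E/F}(\chi)$ are of the degenerate form of Lemma \ref{degen}, handle degenerate $\pi$ by hand, and note that your explicit treatment of $|\cdot|^{it}$-twists and $\lambda$-constants sharpens the paper's sketch. The only loose point is your hedge in the degenerate case: the alternative ``of the form covered by Lemma \ref{degen} over the base $E$'' never arises, since the projective image of $\rho_E$ is a quotient of $\Gal(L/E)\cong C_3$, so $\rho_E$ has abelian image and is simply a sum of characters, which is exactly how the paper concludes.
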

  
  \begin{proof}
		As with the proof of automorphic induction, certain degenerate cases shall be computed ``by
		hand'' by going to the spectral side.

		\textbf{Degenerate case}
		
		We say that \(\pi\) is degenerate if it is of the form in Lemma \ref{degen}.
		Note that \(\Im(\rho_E)\) is the extension of a cyclic group \(\Gal(L/E) \cong C_3\)
		by a central subgroup, i.e. \(\Im(\rho_L)\). Hence \(\Im(\rho_E)\) is abelian and \(\rho_E\)
		decomposes completely as a direct sum of linear characters. We may construct the required
		\(\Pi\) as the isobaric sum of these characters, which thus satisfies the condition of strong
		base change.

		\textbf{Generic case}

		Suppose \(\pi\) is not of the form in Lemma \ref{degen}. For each place \(w\) of \(E\) over a
		place \(v\) of \(F\), we define the local component \(\Pi_w\) via local Langlands
		correspondence, i.e.
		\[
			\Pi_w \coloneq \rec_w^{-1}\left(\rec_v(\pi_v)|_{\mathcal{L}_{E_w}}\right).
		\]
		We then form the global representation \(\Pi = \bigotimes_w \Pi_w\) of 
		\(\GL_n(\mathbb{A}_E)\). It is clear that \(\Pi\) has an automorphic central character and its
		\(L\)-function, defined as an Euler product, is convergent in some right half plane. 

    Let \(\chi\) be a Hecke character of \(E\). By Proposition \ref{ai}, the strong automorphic
		induction \(\tau \coloneq \AI_{E/F}(\chi)\) exists. We thus compute, for the real part of 
		\(s\) sufficiently large,
    \[
			L(s, \Pi \times \chi) = \prod_w L(s, \Pi_w \times \chi_w) = \prod_v L(s, \pi_v
			\times \tau_v) = L(s, \pi \times \tau),
    \]
    where for the middle equality, we pass to the spectral side via local Langlands correspondence
		and invoke Frobenius reciprocity. Thus, via uniqueness of analytic continuation, we have that
		\(L(s, \Pi\times \chi) = L(s, \pi \times \tau)\).
		
    Let \(\tau \cong \boxtimes_j \tau_j\) be the isobaric decomposition of \(\tau\) where each
		\(\tau_j\) is cuspidal automorphic. We can analyze this \(L\)-function through its
		factorization
    \[
			L(s, \pi \times \tau) = \prod_j L(s, \pi \times \tau_j).
		\]
		Recall that the Rankin-Selberg \(L\)-function \(L(s, \pi \times \tau_j)\) is nice
		except possibly for a simple pole at \(s=1\). Moreover, it has such a pole precisely
		when \(\pi\) is isomorphic to the contragredient \(\tau_j^\vee\). Now, in the previous
		corollary, whenever the components \(\tau_j\) have dimensions 2 or 3, they are of the
		degenerate case. It is clear then that \(\tau_j^\vee\) are also of the form in the degenerate
		case. Since \(\pi\) is not of this form, it is not isomorphic to any such
		\(\tau_j^\vee\), so that \(L(s, \pi \times \tau)\) is nice. Now, admitting analytic
		continuation and being bounded in vertical strips are simply properties of analytic functions,
		so \(L(s, \Pi \times \chi)\) satisfies these. As for functional equation, the exact form
		required is such that
		\[
			L(s, \Pi \times \chi) = \epsilon(s, \Pi\times \chi) L(1-s, \Pi^\vee \times \chi^{-1}).
		\]
		By invoking the functional equation of \(L(s, \pi\times\chi)\), we need only check that
		\(\epsilon(s, \Pi\times \chi) = \epsilon(s, \pi\times \tau)\). Because \(\epsilon\)-factors 
		are also defined as a product of local factors, we can similarly invoke local Langlands
		correspondence followed by Frobenius reciprocity. Consequently, \(L(s, \Pi \times
		\chi)\) satisfies the required functional equation and therefore is nice. Therefore, by
		converse theorem, \(\Pi\) is cuspidal automorphic.
  \end{proof}
	
	We similarly state the cuspidality criterion obtained in the proof.

	\begin{corollary}
		In the context of Proposition \ref{bc}, \(\Pi \coloneq \BC_{E/F}(\pi)\) is cuspidal except
		when \(\BC_{L/F}(\pi)\) is an isotypic isobaric sum of Hecke characters, in which case \(\Pi\)
		is an isobaric sum of Hecke characters. The degenerate case may happen for both \(n=2\) and
		\(n=3\).
	\end{corollary}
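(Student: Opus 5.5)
The corollary should be read off from the proof of Proposition \ref{bc}, combined with Lemma \ref{degen} and the cuspidality criterion for cyclic base change (Proposition \ref{bc-cusp}). I would split into the degenerate and generic cases exactly as in that proof.

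\textbf{Degenerate case.} Suppose \(\BC_{L/F}(\pi)\) is an isotypic isobaric sum of Hecke characters. By Lemma \ref{degen}, \(\pi \xleftrightarrow{s} \rho\), where the projective image of \(\rho\) is \(A_4\) and cuts out \(L/F\). As shown in the degenerate case of Proposition \ref{bc}, \(\Im(\rho_E)\) is abelian, so \(\rho_E\) is a sum of characters and \(\Pi\) is an isobaric sum of Hecke characters, hence non-cuspidal. To show that this really occurs for both \(n=2\) and \(n=3\), I would exhibit examples. For \(n=3\), take \(\Ind_{W_K}^{W_F}(\theta')\) with \(\theta'\) a character of \(K\) whose \(\Gal(K/F)\)-conjugates restrict to distinct characters on \(V_4\); this is the representation \(\Lambda\) appearing in Proposition \ref{ai}. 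For \(n=2\), take a tetrahedral \(\rho\) whose projective kernel field is \(L\), for instance \(\Psi_j\) from Proposition \ref{ai}. Such objects exist whenever \(L\) arises this way; otherwise the claim is only that the case is not excluded by the criterion.

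\textbf{Generic case.} Suppose \(\pi\) is not degenerate. The converse theorem shows \(\Pi\) is cuspidal, provided the twisted \(L\)-functions are nice for all Hecke characters \(\chi\) of \(E\), including possible \(\GL_1\) twists. The argument in Proposition \ref{bc} only used that \(\pi\not\cong\tau_j^\vee\) for the summands \(\tau_j\) of \(\AI_{E/F}(\chi)\) of dimension \(2\) or \(3\). Summands of dimension \(4\) or \(1\) cannot be isomorphic to \(\tau_j^\vee\) simply because of dimension, since \(n\in\{2,3\}\).

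The one remaining point is to check that the converse theorem, as stated, yields cuspidality rather than mere automorphy. As I read it, it outputs \(\Pi\) cuspidal automorphic. To be safe, I would add a direct check: if \(\Pi\) were non-cuspidal, then \(\BC_{L/E}(\Pi)=\BC_{L/F}(\pi)\) would decompose. Tracing through the cyclic tower \(L/K/F\) with Proposition \ref{bc-cusp} would then force \(\BC_{L/F}(\pi)\) to be isotypic, contradicting genericity. This consistency check is where I expect the only real work. The argument must handle the possibility that \(\BC_{K/F}(\pi)\) is cuspidal while \(\BC_{L/F}(\pi)\) becomes a non-isotypic sum, and must show such splitting cannot descend to a splitting of \(\Pi\) over \(E\).
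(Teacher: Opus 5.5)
Your proposal is correct and follows the paper's route. The corollary is read off from the proof of Proposition \ref{bc}: the degenerate case goes through Lemma \ref{degen} and the abelian image of \(\rho_E\), and the generic case uses the converse theorem, which already outputs a cuspidal \(\Pi\), so your extra ``direct check'' is not needed.

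If you keep that check, two things need fixing:
\begin{itemize}
\item The sketch is wrong as stated. For \(n=3\), a non-degenerate \(\pi=\AI_{K/F}(\theta')\) with \(\theta'_L\) not \(\Gal(L/F)\)-invariant has \(\BC_{L/F}(\pi)\) equal to a non-isotypic sum of three characters. Here the splitting already happens over \(K\); the case you worry about, with \(\BC_{K/F}(\pi)\) cuspidal and \(\BC_{L/F}(\pi)\) a non-isotypic sum, never occurs. So the tower alone cannot force isotypy, and you must instead identify \(\Pi\cong\AI_{L/E}(\theta'_L)\), which is cuspidal.
\item Your hedge on existence for \(n=2\) is unnecessary. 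By Tate's theorem, \(\Gal(L/F)\cong A_4\subset\PGL_2(\mathbb{C})\) always lifts to a tetrahedral \(\rho\) cutting out \(L\).
\end{itemize}
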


	\section{Automorphy of generalized octahedral representations}
	
	In this section, we establish the modularity of generalized octahedral
	representations in three dimensions. We adapt the method used for the octahedral case in
	\cite{Tunnell}, so it is helpful to sketch Tunnell's argument.

	Suppose \(\rho: \Gamma_F \to \GL_2(\mathbb{C})\) is octahedral, i.e. has projective image
	\(S_4\). Let \(L_\text{proj}/F\) be the \(S_4\)-Galois extension so defined. The normal subgroup
	\(V_4 \subset S_4\) defines an \(S_3\)-Galois extension \(L/F\). Let \(K/F\) be the
	subextension fixed by \(C_3 \subset S_3\) and let \(E/F\) be fixed by an order 2 element of
	\(S_3\), which is non-Galois cubic. The following diagram illustrates the fields considered.

	\begin{figure}[htbp]
		\centering
		\begin{tikzpicture}[thick]
			\node (L) at (0, 2) {\(L\)};
			\node (K) at (1, 1) {\(K\)};
			\node (E) at (-1, 1) {\(E\)};
			\node (F) at (0, 0) {\(F\)};

			\draw (L) -- (K) node[midway, above right] {\(C_3\)};
			\draw (L) -- (E) node[midway, above left] {\(C_2\)};
			\draw (K) -- (F) node[midway, below right] {\(C_2\)};
			
			\draw[dotted] (E) -- (F);
			
			\draw (-1.5, 2) -- (-1.7, 2) -- node[left] {\(S_3\)} (-1.7, 0) -- (-1.5, 0);
		\end{tikzpicture}
	\end{figure}

	One notices that the restriction \(\rho\) to \(W_K\) is tetrahedral, which
	is known to be modular. However, when one invokes base change descent to \(F\), one cannot
	distinguish the two candidates through central character, because the degree of extension is
	not coprime to the dimension of the representation. Tunnell observes that the restriction of
	\(\rho\) to \(W_E\) is dihedral, which is also known to be modular. Moreover, the automorphic
	analogue, i.e. non-normal cubic base change for \(\GL_2\), was established by
	\cite{JPSS-Cubic}. These two known modular cases then impose enough algebraic conditions to
	isolate a single candidate and prove modularity of \(\rho\).

	Our proof is extremely similar, because after defining a similar set of field extensions,
	a 3-dimensional generalized octahedral representation \(\rho\) becomes generalized tetrahedral
	upon restriction to \(W_K\), and becomes monomial upon restriction to an index 4 non-normal
	subgroup \(W_E\) which defines an \(A_4\)-quartic extension, the base change through which is
	established in the previous section. 
	
	\begin{proof}[Proof of Theorem \ref{main}]
		Recall that \(\rho: \Gamma_F \to \GL_3(\mathbb{C})\) has projective image \(C_3^2\rtimes
		\SL(2, 3)\). Let \(L_{\text{proj}}/F\) be the Galois extension so defined.
		This group contains a normal subgroup \(N \cong (C_3)^2 \rtimes C_2\), where
		\(C_2\) is the center of \(\SL(2,3)\), with quotient group \(A_4\).
		Let \(L\) be the subfield fixed by \(N\), so that \(L/F\) is a Galois
		extension with \(\Gal(L/F) \cong A_4\). Inside \(\Gal(L/F)\), let \(V_4\) denote the normal
		Klein four-group. We define \(K\) to be the subfield fixed by \(V_4\), and \(E\) to be the
		subfield fixed by an order \(3\) element of \(A_4\). Thus the
		extension \(E/F\) is an \(A_4\)-quartic extension with Galois closure \(L\), which places us
		in the setting of the previous section. For easy reference, we reproduce a simpler version of
		the diagram of fields in the previous section.
		
		\begin{figure}[htbp]
			\centering
			\begin{tikzpicture}[thick]
				\node (L) at (0, 2) {\(L\)};
				\node (K) at (1, 1) {\(K\)};
				\node (E) at (-1, 1) {\(E\)};
				\node (F) at (0, 0) {\(F\)};

				\draw (L) -- (K) node[midway, above right] {\(V_4\)};
				\draw (L) -- (E) node[midway, above left] {\(C_3\)};
				\draw (K) -- (F) node[midway, below right] {\(C_3\)};
				
				\draw[dotted] (E) -- (F);
				
				\draw (-1.5, 2) -- (-1.7, 2) -- node[left] {\(A_4\)} (-1.7, 0) -- (-1.5, 0);
			\end{tikzpicture}
		\end{figure}

		The restrictions of \(\rho\) to the intermediate
		fields \(K\) and \(E\) are automorphic. Indeed, the restriction \(\rho_K\) has projective
		image corresponding to \(H_{72}\), so that it is generalized tetrahedral and is automorphic as
		a corollary of adjoint lifting of \(\GL_3\) by Gan in \cite{Gan}.
		Let \(\pi_K\) be the corresponding cuspidal representation on
		\(\GL_3(\mathbb{A}_K)\) such that \(\pi_K \leftrightarrow \rho_K\). The restriction \(\rho_E\)
		is imprimitive as its projective image is \(C_3^2 \rtimes C_6\) which is not one of the
		primitive groups. In fact, it is induced from a non-Galois cubic extension of \(E\), and is
		thus also automorphic by non-Galois cubic automorphic induction of Hecke characters by
		\cite{JPSS-GL3}. Let \(\pi_E\) be its corresponding cuspidal representation on
		\(\GL_3(\mathbb{A}_E)\) such that \(\pi_E \xleftrightarrow{s} \rho_E\).

		For each \(g\in \Gal(K/F)\), because \(\pi_K^g \leftrightarrow \rho_K^g\) and \(\rho_K^g \cong
		\rho_K\), we have \(\pi_K\) is invariant under \(\Gal(K/F)\). Hence, there exists exactly 
		three cuspidal representations \(\pi_1, \pi_2, \pi_3\) on \(\GL_3(\mathbb{A}_F)\) such that
		\(\BC_{K/F}(\pi_i) = \pi_K\). Moreover, they are twists of each other by a nontrivial Hecke
		character associated with \(K/F\), denoted \(\omega_{K/F}\).
		One now wishes to fix one out of the three candidate descents.
		
		Let \(\pi_L\) denote \(\BC_{L/E}(\pi_E)\), so that \(\pi_L \xleftrightarrow{s} \rho_L\).
		By Proposition \ref{bc}, \(\pi_{i, E} \coloneq \BC_{E/F}(\pi_i)\)
		exists and is cuspidal. Because successive restrictions of local Langlands parameters commute,
		we also have that \(\BC_{L/E}(\pi_{i, E}) \cong \pi_L\). Because \(L/E\) is a cyclic cubic
		extension, the fibre of the base change \(\BC_{L/E}\) consists of twists by a Hecke character
		\(\omega_{L/E}\) associated with \(L/E\). Therefore, \(\pi_E \cong \pi_{1, E} \otimes
		\omega_{L/E}^j\) for some \(j \in \{0, 1, 2\}\).
    
    The base change of \(\pi_1 \otimes \omega_{K/F}^i\) to \(E\) is 
		\(\pi_{1, E} \otimes (\omega_{K/F} \circ N_{E/F})^i\).
		Because \(K \cap E = F\) and \(KE = L\), the composition \(\omega_{K/F} \circ N_{E/F}\) is a
		nontrivial character associated with \(L/E\), which we assume to be \(\omega_{L/E}\) without
		loss of generality. Thus, the base changes of the three descents to \(E\) are exactly
		\(\pi_{1, E} \otimes \omega_{L/E}^i\). Because \(\pi_E\) is, exactly
		one of these twists, it follows that exactly one of \(\pi_i\) base changes to
		\(\pi_E\) over \(E\). We fix \(\pi\) to be this unique descent.
   
		It remains to show that \(\pi \leftrightarrow \rho\).
		Let \(v\) be a finite place of \(F\) unramified for \(L/F\), \(\pi\) and \(\rho\). The
		Frobenius conjugacy class of \(v\) in \(\Gal(L/F) \cong A_4\) must have order 1, 2, or 3.

    If \(\Frob_v\) has order 1 or 2 in \(\Gal(L/F)\), its image in the quotient
    \(\Gal(K/F) \cong C_3\) is the identity, meaning \(v\) splits completely in the cyclic cubic
    extension \(K/F\). Thus, there exists a place \(u\) of \(K\) dividing \(v\) with local
    degree 1, so \(K_u = F_v\). Because we know \(\pi_K\leftrightarrow\rho_K\), we have
		\(t_u(\pi_K) = \rho_K(\Frob_u)\). Via the identity local base change from \(F_v\) to \(K_u\),
		we deduce that \(t_v(\pi) = \rho(\Frob_v)\). Otherwise, if \(\Frob_v\) has order 3 in
		\(\Gal(L/F)\), it acts as a 3-cycle on the four cosets of \(C_3\) in \(A_4\). This permutation
		leaves exactly one coset fixed, meaning \(v\) splits in the quartic extension \(E/F\) into a
		place \(w\) of degree 1 and another of degree 3. Because we fixed \(\pi\) such that
		\(\pi_E \xleftrightarrow{s} \rho_E\), we deduce that \(t_w(\pi_E) = \rho_E(\Frob_w)\). Via
		the identity local base change from \(F_v\) to \(E_w\), this again guarantees that
		\(t_v(\pi) = \rho(\Frob_v)\). Thus, almost all local components
		of \(\pi\) and \(\rho\) agree, proving that \(\pi \leftrightarrow \rho\).
	\end{proof}
	
	\begin{remark}
		It is natural to ask whether \(\pi \xleftrightarrow{s} \rho\). Following the discussion in
		Section \ref{strong}, this should follow if the adjoint lifting for \(\GL_3\) in \cite{Gan} is
		proven to be strong.
	\end{remark}

	\section{Primitive solvable Artin representations}
	
	In this section, we give a definition of generalized tetrahedral and octahedral representations
	in arbitrary dimensions based on Suprunenko's work in the classification of primitive solvable
	subgroups of general linear groups in \cite[Chapter V]{Suprunenko}. We additionally deduce that
	all primitive solvable Artin representations of dimension 6 are automorphic using the
	Rankin-Selberg lift from \(\GL_2 \times \GL_3\) to \(\GL_6\) due to Kim and Shahidi in
	\cite{KS}.
	
	Let \(E^{\pm}_{p, m}\) denote the extraspecial group of order \(p^{1+2m}\), where the
	superscript \(+\) denote the extraspecial group of exponent \(p\) and \(-\) denote the one of
	exponent \(p^2\). The commutator map \([-,-]: E^{\pm}_{p, m} \to \mathbb{F}_p\) factors through
	\(E^\pm_{p, m}/Z(E^{\pm}_{p, m}) \cong \mathbb{F}_p^{2m}\) and defines a nondegenerate
	alternating bilinear form. Therefore, by Stone-von Neumann theorem and the theory of Weil
	representations, there exist \(p^m\)-dimensional primitive representations of
	\(E^\pm_{p, m}\rtimes\Aut(2m, p)\), where \(\Aut(2m, 2) \coloneq O^\pm(2m, 2)\) and
	\(\Aut(2m, p) \coloneq \Sp(2m, p)\) for \(p\) odd. If one take \(B
	\subset \Aut(2m, p)\) to be a solvable subgroup fixing no totally isotropic subspaces,
	then this gives a solvable primitive representation of dimension \(p^m\). It turns out that this
	classifies all solvable primitive representations up to central elements.

	\begin{theorem}[{\cite[Theorem 20.17, Theorem 20.18]{Suprunenko}}]
    Let \(\rho: \Gamma \hookrightarrow \GL_n(\mathbb{C})\) be a finite primitive solvable subgroup
		of \(\GL_n(\mathbb{C})\) and let \(n = p_1^{m_1}\ldots p_k^{m_k}\) be the prime factorization 
		of \(n\). Then
    \[
			\Gamma \cong Z(\Gamma) \circ \left((E^{\pm}_{p_1, m_1} \times \cdots \times E^{\pm}_{p_k,
			m_k}) \rtimes B\right)
    \]
    where \(Z(\Gamma)\) is the center of \(\Gamma\), the operation \(\circ\) denotes the central
		product, and \(B\) is a solvable subgroup of \(\prod_{i=1}^k \Aut(2m_i, p_i)\) acting
		naturally on the direct product of extraspecial groups.

		If \(B_i\) is the projection of \(B\) onto \(\Aut(2m_i, p_i)\), then each \(B_i\)
		fixes no totally isotropic subspaces. Moreover, there exists a character \(\chi: Z(\Gamma)\to
		\mathbb{C}^\times\) and primitive solvable representations
		\[
			\rho_i: E^{\pm}_{{p_i}, {m_i}}\rtimes B_i \hookrightarrow \GL_{p_i^{m_i}}(\mathbb{C})
		\]
		such that \(\rho\) is isomorphic to the restriction of
		\(\chi \otimes \rho_1 \otimes\cdots\otimes \rho_k\) to \(\Gamma\), which corresponds to the
		restriction from \(\prod_i B_i\) to \(B\).
  \end{theorem}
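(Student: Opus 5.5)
Suprunenko's theorem is a classification result that the paper cites. Any proof proposal therefore has to reconstruct the standard argument for finite primitive solvable linear groups. The approach is to analyse a maximal abelian normal subgroup and the Fitting subgroup of \(\Gamma\) by Clifford theory.

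\emph{Step 1: abelian normal subgroups are central.} Let \(A \trianglelefteq \Gamma\) be abelian. By Clifford's theorem, \(\rho|_A\) is a sum of \(\Gamma\)-conjugate isotypic pieces. If more than one character of \(A\) appeared, \(\rho\) would be induced from the stabilizer of one isotypic component, contradicting primitivity. Hence \(\rho|_A\) is isotypic, and \(A\) acts by scalars, so \(A \subseteq Z(\Gamma)\).

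\emph{Step 2: structure of the Fitting subgroup.} Let \(F = \mathrm{Fit}(\Gamma)\) and let \(P\) be its Sylow \(p\)-subgroup. Every characteristic abelian subgroup of \(P\) is normal in \(\Gamma\), hence central by Step 1. By P.~Hall's theorem, \(P\) is then a central product of a cyclic group with an extraspecial group of exponent \(p\) (for odd \(p\)), or with an extraspecial group together with a cyclic, dihedral, semidihedral or quaternion factor (for \(p = 2\)). The non-cyclic degenerate options are excluded by Step 1. Moreover, \(\rho|_P\) is isotypic, and by the same argument it is irreducible modulo the centre. Stone–von Neumann then forces the extraspecial part \(E_{p_i}\) to have order \(p_i^{1+2m_i}\), with \(p_i^{m_i}\) dividing \(n\). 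Comparing the dimension of the irreducible representation of \(F/Z\) with \(n\) is the subtle point. A priori \(\rho|_F\) could be a multiple of an irreducible representation of smaller dimension. To rule this out, use that in a solvable group \(C_\Gamma(F) \subseteq F\). If \(\rho|_F = e\cdot\eta\), then a projective tensor factorization \(\rho \cong \tilde\eta \otimes \mu\) (Clifford) shows that \(\Gamma/F\) acts through \(\mu\). This would make \(\mu\) a projective representation of dimension \(e\) of a group whose own Fitting subgroup maps into \(F\), and it would contradict \(C_\Gamma(F)\subseteq F\) unless \(e = 1\). Hence \(F = Z(\Gamma)\circ \prod_i E^\pm_{p_i,m_i}\) with \(\prod p_i^{m_i} = n\).

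\emph{Step 3: the quotient \(B\).} The conjugation action of \(\Gamma\) on \(F/Z(F) \cong \prod \mathbb{F}_{p_i}^{2m_i}\) preserves the commutator forms. This gives a map \(\Gamma/F \to \prod \Aut(2m_i,p_i)\), which is injective because \(C_\Gamma(F)\subseteq F\). For \(p=2\), the squaring map is also preserved, which is why the relevant group is \(O^\pm\). By Weil representation theory, each \(E_{p_i}\) extends to a projective representation \(\rho_i\) of \(E_{p_i}\rtimes B_i\); for solvable \(B_i\) this can be made linear after a central modification. Since \(\rho\) and \(\bigotimes\rho_i\) agree on \(F\), which is irreducible, Schur's lemma gives \(\rho \cong \chi\otimes\bigotimes_i\rho_i\) on \(\Gamma\), where \(\chi\) absorbs the scalar discrepancy on \(Z(\Gamma)\).

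\emph{Step 4: the condition on \(B_i\).} If \(B_i\) fixed a totally isotropic subspace \(W \neq 0\), its preimage in \(E_{p_i}\) would contain a normal abelian subgroup that is not central in \(\Gamma\), contradicting Step 1.

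The main obstacle is Step 2, namely proving that \(\rho|_F\) is irreducible, i.e. that the tensor factor \(\mu\) is trivial. I would handle it by induction on \(n\): apply the whole classification to \(\mu\) (which is primitive, since \(\rho\) is) and show that its Fitting subgroup lifts to a nilpotent normal subgroup of \(\Gamma\) lying outside \(F\), which is impossible.
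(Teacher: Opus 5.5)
\textbf{Gap in Step 2.} Your assertion that \(\rho|_F\) is irreducible, so that \(\prod_i p_i^{m_i} = n\), is false for a general finite primitive solvable \(\Gamma\), so this step cannot be repaired as formulated. (The paper gives no proof here, only the citation to Suprunenko.)

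Here is a counterexample. Take \(E = E^{+}_{3,1}\) and \(G = E \rtimes \SL(2,3)\), of order \(648\), where \(\SL(2,3) = \Sp(2,3)\) acts naturally on \(E/Z(E) \cong \mathbb{F}_3^2\). Let \(\tilde\eta\) be the \(3\)-dimensional Weil representation of \(G\) extending the irreducible \(\eta\) of \(E\). Let \(\mu\) be the \(2\)-dimensional representation of \(\SL(2,3)\) inflated to \(G\), and set \(\rho = \tilde\eta \otimes \mu\), so \(\Gamma = \rho(G) \cong G\).
\begin{enumerate}
\item \(\rho\) is faithful and irreducible (Gallagher).
\item \(\rho\) is primitive. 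Since \(G^{\mathrm{ab}} \cong C_3\), there is no subgroup of index \(2\), and the only subgroup of index \(3\) is the normal subgroup \(E \rtimes Q_8\). On it \(\rho\) stays irreducible because \(\mu|_{Q_8}\) is. If \(\rho = \Ind_H^G \lambda\) with \([G:H] = 6\), then \(E \not\subseteq H\), since \(\rho|_E = 2\eta\) has no linear constituent. So \(HE/E\) would be a subgroup of \(\SL(2,3)\) of order \(108/|H\cap E| \in \{12,36,108\}\), which does not exist.
\item Yet \(\mathrm{Fit}(G) = E\), because \(O_3(G) = E\) and \(O_2(G) \subseteq C_G(E) = Z(E)\). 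So \(F = E\) acts on \(\mathbb{C}^6\) as \(2\eta\).
\end{enumerate}
In this example \(C_G(F) \subseteq F\) holds, and \(\mu\) is a primitive representation of \(G/F \cong \SL(2,3)\). Its Fitting subgroup \(Q_8\) pulls back to \(E \rtimes Q_8\), which is not nilpotent. In general the preimage of \(\mathrm{Fit}(\Gamma/F)\) is only metanilpotent, so neither your centralizer argument nor your proposed induction produces a contradiction. Step 3, which applies Schur's lemma to an irreducible \(F\), falls with it.

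\textbf{The statement itself.} The same example shows that the displayed isomorphism \(\Gamma \cong Z(\Gamma)\circ((E^{\pm}_{2,1}\times E^{\pm}_{3,1})\rtimes B)\) fails for this \(\Gamma\): it would give a nilpotent normal subgroup of order \(216\), while \(\mathrm{Fit}(\Gamma)\) has order \(27\). The paper's remark notes that Suprunenko works with maximal primitive solvable groups, and that is where your argument goes through.

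\textbf{How to repair it.} Embed \(\Gamma\) in a maximal solvable subgroup \(M \subset \GL_n(\mathbb{C})\). \(M\) is still primitive and is finite modulo scalars.
\begin{enumerate}
\item Decompose the natural representation of \(M\) as \(\tilde\eta\otimes\mu\) on \(\mathbb{C}^e\otimes\mathbb{C}^f\), according to its restriction \(f\cdot\eta\) to \(\mathrm{Fit}(M)\).
\item The group \((\mathbb{C}^\times\tilde\eta(M))\otimes(\mathbb{C}^\times\mu(M))\) is solvable and contains \(M\), so by maximality it equals \(M\).
\item If \(f>1\), the irreducible solvable group \(\mathbb{C}^\times\mu(M)\) has a non-scalar nilpotent normal subgroup \(N\). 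Then \(1\otimes N\) is a nilpotent normal subgroup of \(M\), hence lies in \(\mathrm{Fit}(M)\). That is impossible, because \(\mathrm{Fit}(M)\) acts through the first tensor factor only.
\end{enumerate}
Hence \(f=1\), and your Steps 3 and 4 then apply to \(M\). For the original \(\Gamma\), you only obtain that \(\rho\) is the restriction from \(M\) of \(\chi\otimes\rho_1\otimes\cdots\otimes\rho_k\); \(\Gamma\) itself need not contain \(\prod_i E_{p_i,m_i}\).
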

	
	\begin{remark}
		The approach of \cite{Suprunenko} is classical and explicitly work with matrices. Moreover, it
		phrases all results in the form of maximal primitive solvable subgroups, which are not finite
		because they are of the form \(\mathbb{C}^\times \circ (E^\pm_{p, m}\rtimes B)\) where \(B\)
		is also maximal. Generally speaking, these results follow from considering the Fitting
		subgroup of \(\Gamma\), i.e. the unique maximal normal nilpotent subgroup of \(\Gamma\).
		We note that the use of Fitting subgroup in the proof of cases of strong Artin conjecture has
		precedents in \cite{Zhang, Wong, Martin2}.
	\end{remark}

	It is straightforward to check that when \(n = 2, 3\), the description matches the explicit
	classification of finite primitive solvable subgroups. When \(n=4\), Martin carried out the
	classification in \cite{Martin2} and proved that only two open cases remain for the strong Artin
	conjecture. More precisely, when \(\rho\) is primitive solvable of symplectic type, the group
	\(B\) is either \(C_5, D_{10}\) or \(F_{20}\), the last of which denotes the Frobenius group of
	order 20. Martin proved that while strong Artin conjecture for the first case follows from
	exterior square lifting of \(\GL_4\) due to \cite{Kim}, the latter two cases will only follow
	from an additional functoriality for the base change of \(\GL_4\) over a non-normal quintic
	extension of number fields whose Galois closure has Galois group \(D_{10}\) and \(F_{20}\),
	respectively.

	We thus observe that the proof of strong Artin conjecture for the case \(p_i \mid \abs{B_i}\) 
	usually requires additional functoriality, commonly a non-normal base change, than the case
	\(p_i \nmid \abs{B_i}\). To this end, we propose a definition of tetrahedral and octahedral
	representations in arbitrary dimensions.

	\begin{definition}
		Let \(\rho: \Gamma_F \to \GL_n(\mathbb{C})\) be a primitive solvable Artin representation.
		We decompose \(\Im(\rho)\) in the above form and consider, for each prime \(p_i\mid n\), the
		subgroup \(B_i \subset \Aut(2m_i, p_i)\).
		We say \(\rho\) is (generalized) \textbf{tetrahedral} if \(p_i \nmid \abs{B_i}\) for every
		prime \(p_i\). We say \(\rho\) is (generalized) \textbf{octahedral} otherwise.
	\end{definition}
	
	Suprunenko's classification also gives a natural strategy for proving strong Artin conjecture
	for primitive solvable representations of arbitrary dimensions. Namely, one first proves the
	modularity of \(\rho_i\) of prime power dimensions, such that \(\pi_i \leftrightarrow \rho_i\).
	One then invokes Rankin-Selberg lifts and twists by the character \(\chi\), from which
	one obtains \((\pi_1 \boxtimes \cdots \boxtimes \pi_k)\otimes \chi\). Finally, one applies base
	change from the extension defined by \(\prod B_i\) to that defined by \(B\). While all steps are
	highly nontrivial, we are interested in when Arthur-Clozel base change is enough for
	the last step. It motivates the following group theoretic lemma. Recall that a subgroup
	\(H \subset \Gamma\) is subnormal if the sequence of normalizers
	\(K_0 = H\), \(K_{i+1} = N_\Gamma(K_i)\) eventually terminates at \(\Gamma\). Equivalently, the
	sequence of commutators \(K_0 = \Gamma, K_{i+1} = [H, K_i]\) is eventually contained in \(H\).

	\begin{lemma}
		Let \(B_1, \ldots, B_k\) be finite groups and \(B \subset \prod_i B_i\) be a subdirect product,
		i.e. \(B\) surjects onto each \(B_i\). We identify each \(B_i\) naturally as a subgroup of \(\prod_i
		B_i\) and define \(N_i \coloneq B \cap B_i\). Then \(B\) is subnormal in \(\prod_i B_i\) if
		and only if \(B_i/N_i\) is nilpotent for all \(i\), in which case there exists a subnormal
		series from \(B\) to \(\prod_i B_i\) with each successive quotient being cyclic of prime order.
	\end{lemma}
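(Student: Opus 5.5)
The plan is to pass to a quotient in which \(B\) meets every factor trivially, then read off nilpotency of the quotients \(B_i/N_i\) from iterated commutators. Throughout, write \(G \coloneq \prod_i B_i\) and let \(p_i: G \to B_i\) be the projections.

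\textbf{Step 1: normality of \(N_i\).} First I check that \(N_i\) is normal in \(B_i\). Take \(x \in B_i\). Since \(B\) surjects onto \(B_i\), there is \(b \in B\) with \(p_i(b) = x\). The other coordinates of \(b\) commute with \(B_i\), so conjugation by \(x\) on \(B_i\) agrees with conjugation by \(b\). Conjugation by \(b\) preserves both \(B\) and \(B_i\), hence preserves \(N_i\).

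\textbf{Step 2: reduction to \(\overline{B}\).} Set \(N \coloneq \prod_i N_i\). By Step 1, \(N\) is normal in \(G\), and \(N \subset B\). Put \(Q_i \coloneq B_i/N_i\), \(\overline{G} \coloneq G/N = \prod_i Q_i\) and \(\overline{B} \coloneq B/N\). By the correspondence theorem for subgroups containing the normal subgroup \(N\), the normalizer chains correspond, so \(B\) is subnormal in \(G\) if and only if \(\overline{B}\) is subnormal in \(\overline{G}\). Moreover, subnormal series with prime cyclic factors lift and descend between the two settings. I then note two properties of \(\overline{B}\). It still surjects onto each \(Q_i\). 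It also satisfies \(\overline{B} \cap Q_i = 1\): if \(bN \in Q_i\), then \(b \in B \cap (B_i N)\), and since \(N \subset B\) this gives \(b \in (B\cap B_i)N = N_iN = N\).

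\textbf{Step 3: nilpotent implies subnormal.} If every \(Q_i\) is nilpotent, then \(\overline{G}\) is nilpotent, so every subgroup of it is subnormal. In particular the normalizer chain from \(\overline{B}\) reaches \(\overline{G}\). To obtain prime-order steps, I refine each step \(K_j \trianglelefteq K_{j+1}\) of this chain. The quotient \(K_{j+1}/K_j\) is a subquotient of a nilpotent group, hence solvable, so it has a composition series with cyclic factors of prime order. Pulling this series back and lifting along \(N\) gives the required subnormal series from \(B\) to \(G\).

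\textbf{Step 4: subnormal implies nilpotent.} Suppose \(\overline{B}\) is subnormal. I use the commutator characterization recalled before the lemma: the sequence \(K_0 = \overline{G}\), \(K_{j+1} = [\overline{B}, K_j]\) is eventually contained in \(\overline{B}\). Since iterated commutators are monotone in the second argument, the analogous sequence started at \(Q_i \subset \overline{G}\) is also eventually inside \(\overline{B}\). Because \(Q_i\) is normal in \(\overline{G}\), every term of this sequence also stays in \(Q_i\). Hence it eventually lies in \(\overline{B} \cap Q_i = 1\).

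The key computation comes next. For \(b \in \overline{B}\) and \(x \in Q_i\), we have \([b, x] = [p_i(b), x]\), because the other coordinates of \(b\) commute with \(Q_i\). Since \(\overline{B}\) surjects onto \(Q_i\), this gives \([\overline{B}, H] = [Q_i, H]\) for every subgroup \(H \subset Q_i\). Therefore the sequence is exactly the lower central series of \(Q_i\). Its eventual triviality means \(Q_i\) is nilpotent.

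I expect the main obstacle to be Step 2, specifically verifying \(\overline{B}\cap Q_i = 1\) and the surjectivity bookkeeping. That Goursat-type reduction is what makes Step 4 go through cleanly. The remaining steps are standard facts about nilpotent groups.
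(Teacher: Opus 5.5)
Your proposal is correct and takes essentially the same route as the paper. You quotient by \(\prod_i N_i\), then use the commutator characterization of subnormality together with normality of the factors to push the iterated commutators into \(\overline{B}\cap Q_i = 1\). Surjectivity then identifies those commutators with the lower central series, and for the converse you use that subgroups of nilpotent groups are subnormal and refine the resulting series to prime-order steps. You also verify two details the paper leaves implicit: that \(N_i \trianglelefteq B_i\), so that \(\prod_i N_i\) is normal and \(B_i/N_i\) makes sense, and that \(\overline{B}\) meets each factor trivially, via the modular law.
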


	\begin{proof}
		(\(\Rightarrow\))
		The property of being subnormal is preserved and reflected after quotienting out a normal
		subgroup contained in \(B\). Thus, considering the quotient group \(\prod_i B_i / \prod_i
		N_i\), we may assume that each \(N_i\) is trivial. Then because \(X_0 = \prod_i B_i, X_{j+1} =
		[B, X_j]\) is eventually contained in \(B\), we have that \(Y_{i, 0} = B_i, Y_{i, j+1} = [B,
		Y_{i, j}]\) is eventually contained in \(B\). Since \(B_i\) is normal in \(\prod_i B_i\),
		\(Y_{i, j}\) is also eventually contained in \(B_i\), which means \(Y_{i, j}\) is eventually
		trivial. If we apply the projection map from \(B\) to \(B_i\), which is surjective, this means
		that the lower central series of \(B_i\) eventually terminates. Hence \(B_i\) is nilpotent.

		(\(\Leftarrow\))
		After quotienting by \(\prod_i N_i\), we have \(\prod_i B_i\) is nilpotent, so there is a
		subnormal series connecting from \(B\) to \(\prod_i B_i\) which may be refined to consist of
		steps of cyclic of prime degree.
	\end{proof}
	
	In the case \(n = 6\), we have the Rankin-Selberg lift from \(\GL_2\times\GL_3\) to \(\GL6\)
	due to Kim and Shahidi as in \cite{KS} as well as enough conditions for applying
	Arthur-Clozel base change, so we have the following corollary.

	\begin{corollary}
		Let \(\rho: \Gamma_F \to \GL_6(\mathbb{C})\) be a primitive solvable Artin representation. Then
		the strong Artin conjecture is true for \(\rho\).
	\end{corollary}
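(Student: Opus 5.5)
Apply Suprunenko's theorem with \(n = 6 = 2\cdot 3\). Write \(\Gamma = \Im(\rho) \cong Z(\Gamma)\circ\bigl((E^{\pm}_{2,1}\times E^{\pm}_{3,1})\rtimes B\bigr)\), with \(B\subset \Aut(2,2)\times\Aut(2,3)\) surjecting onto \(B_1\subset O^\pm(2,2)\) and \(B_2\subset \Sp(2,3)=\SL(2,3)\). The theorem gives \(\rho\cong(\chi\otimes\rho_1\otimes\rho_2)|_\Gamma\) with \(\rho_1,\rho_2\) primitive solvable of dimensions \(2\) and \(3\). The first task is to realise these as Galois representations, or at least as representations of Weil groups \(W_{F'}\) for some finite solvable extension \(F'/F\).

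Let \(L_B/F\) be the extension cut out by \(\Gamma\to B\). The primitive groups \(E^\pm_{2,1}\rtimes B_1\) and \(E^\pm_{3,1}\rtimes B_2\) only become representations of \(\Gamma_F\) after passing to a field where the \(B\)-action can be lifted to \(B_1\times B_2\). To arrange this, let \(F'\subset L_B\) be the subfield fixed by a subgroup corresponding to \(B\cap(N_1\times N_2)\)-type data. Concretely, I would choose a field \(F'\) over which the image of \(\Gamma_{F'}\) in \(B\) is \(N_1\times N_2\).

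Over \(F'\), the projective representation splits as a tensor product of honest projective representations \(\overline{\rho_1}\), \(\overline{\rho_2}\) of \(\Gamma_{F'}\). By Tate's lifting theorem these lift to genuine representations \(\rho_1',\rho_2'\) with \(\rho_{F'}\cong \chi'\otimes\rho_1'\otimes\rho_2'\) for some character \(\chi'\).

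Next I would prove automorphy of each factor. Each \(\rho_i'\) is either primitive or has abelian-by-cyclic image, so it falls into one of three cases:
\begin{enumerate}
\item \(\rho_1'\) is dihedral, tetrahedral or octahedral, and is modular by Langlands and Tunnell.
\item \(\rho_2'\) is monomial, of type \(H_{36}\) or \(H_{72}\) (Gan), or of type \(H_{216}\) (Theorem~\ref{main}).
\item \(\rho_i'\) is imprimitive, in which case it is handled by automorphic induction.
\end{enumerate}
Then apply the Kim--Shahidi lift \(\pi_1\boxtimes\pi_2\) and twist by \(\chi'\) to obtain \(\Pi'\leftrightarrow\rho_{F'}\).

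The crux is descent from \(F'\) to \(F\). Since \(\dim\rho_1=2\) and \(B_1\subset O^\pm(2,2)\cong S_3\) must fix no isotropic subspace, \(B_1\) is \(C_3\) or \(S_3\). The subgroup \(B_2\subset\SL(2,3)\) is \(Q_8\), \(C_4\)-type, or \(\SL(2,3)\). The quotients \(B_i/N_i\) are isomorphic to each other via Goursat, so they are a common quotient of \(B_1\) and \(B_2\). Since \(S_3\) and \(\SL(2,3)\) share only quotients \(1\) and \(C_3\), this common quotient is abelian (trivial, \(C_2\), or \(C_3\)), hence nilpotent. The preceding lemma therefore gives a subnormal series from \(B\) to \(B_1\times B_2\) with prime cyclic steps.

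This series dictates the choice of \(F'\) as a tower of cyclic prime-degree extensions inside a compositum. The descent is then carried out step by step with Arthur--Clozel: at each step, \(\Pi'\) is Galois-invariant because \(\rho\) is defined over \(F\), so it descends. Among the resulting twists, the correct one is selected by comparing with \(\rho\) via further restrictions.

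\textbf{Main obstacle.} I expect the main obstacle to be exactly this selection when the step degree divides \(6\), since the central character no longer separates the twists, as in Tunnell's problem. Here I would first try to show that primitivity of \(\rho\) forces the series to avoid such ambiguity. Failing that, I would use both branches \(F'_1,F'_2\) of coprime degree, as in the proof of Theorem~\ref{main}, to pin down a unique descent with matching Frobenius at all unramified places. Strongness would then follow from the strongness of each lift, as in Section~\ref{strong}, provided the step using Gan's lift can be avoided or made strong.
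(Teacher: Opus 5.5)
\paragraph{Where your proposal matches the paper, and the gap.}
Your group theory agrees with the paper's proof. Via Goursat, the common quotient \(Q\cong B_1/N_1\cong B_2/N_2\) is \(1\), \(C_2\) or \(C_3\), hence nilpotent, and the paper uses exactly this to justify its final step. The way you use it, however, leaves a genuine gap.

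\begin{itemize}
\item \emph{The descent is not resolved.} With your choice of \(F'\), \(\Gal(F'/F)\cong B/(N_1\times N_2)\cong Q\), so the last step is a descent along a cyclic extension of degree \(q\in\{2,3\}\). The case \(Q\neq 1\) genuinely occurs, e.g.\ \(B\) the fibre product of \(S_3\) and \(Q_8\) over \(C_2\). So primitivity of \(\rho\) will not remove the ambiguity. At best Arthur--Clozel gives \(q\) distinct candidates \(\pi\otimes\omega_{F'/F}^j\). Since \(q\mid 6\), they all have the same central character. Singling out the right one needs an auxiliary extension on which \(\rho\) is known to be automorphic and to which base change from \(F\) is available; this is the role the \(A_4\)-quartic field \(E\) plays in Theorem~\ref{main}. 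You supply no such field, so as written the argument does not produce \(\pi\leftrightarrow\rho\).
\item \emph{The subnormality lemma is used in the wrong direction.} The groups between \(B\) and \(B_1\times B_2\) contain \(B\). They therefore correspond to no extensions of \(F\) inside \(L_B\), and cannot dictate \(F'\). Your \(F'/F\) comes from the normal subgroup \(N_1\times N_2\) of \(B\) and needs no lemma.
\end{itemize}

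\paragraph{The missing observation: no extension of \(F\) is needed.}
Your premise that the factors only become representations of \(\Gamma_F\) after passing to \(F'\) is false. Put \(V_i=E_i/Z(E_i)\).

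\begin{itemize}
\item The projective image of \(\rho\) is \((V_1\times V_2)\rtimes B\subset (V_1\rtimes B_1)\times(V_2\rtimes B_2)\subset \PGL_2(\mathbb{C})\times\PGL_3(\mathbb{C})\).
\item The tensor product map \(\PGL_2(\mathbb{C})\times\PGL_3(\mathbb{C})\to\PGL_6(\mathbb{C})\) is injective.
\item Hence \(\overline{\rho}=\overline{\rho}_1\otimes\overline{\rho}_2\), with \(\overline{\rho}_i\colon\Gamma_F\to\PGL_{n_i}(\mathbb{C})\) defined over \(F\) itself.
\item Because \(B\to B_i\) is onto, the images are \(V_1\rtimes B_1\cong A_4\) or \(S_4\), and \(V_2\rtimes B_2\cong H_{36}\), \(H_{72}\) or \(H_{216}\).
\end{itemize}

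Tate's theorem applied over \(F\) then gives \(\rho\cong\chi'\otimes\rho_1'\otimes\rho_2'\) with Artin representations \(\rho_i'\) of \(\Gamma_F\). Langlands--Tunnell, Gan (via Lapid) and Theorem~\ref{main} give \(\pi_i\leftrightarrow\rho_i'\). The Kim--Shahidi product then gives \(\Pi=(\pi_1\boxtimes\pi_2)\otimes\chi'\) with \(\Pi\leftrightarrow\rho\). \(\Pi\) is cuspidal because \(\rho\) is irreducible: \(L^S(s,\Pi\times\Pi^\vee)=L^S(s,\rho\otimes\rho^\vee)\) has a simple pole at \(s=1\). This bypasses the base change/descent step altogether, which the paper handles instead through Goursat and the subnormality lemma.

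\paragraph{Smaller points.}
The corollary only asks for agreement at almost all places, and the \(H_{216}\) input is only known weakly. Your closing concern about strongness is therefore unnecessary.

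There are two minor slips, though your final list for \(Q\) is still correct:
\begin{itemize}
\item \(O^+(2,2)\cong C_2\), not \(S_3\).
\item \(S_3\) and \(\SL(2,3)\) share only the trivial quotient, since \(S_3\) has no \(C_3\) quotient.
\end{itemize}
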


	\begin{proof}
		All primitive solvable Artin representations of dimension 2 and 3 are known to be automorphic.
		For the base change step, we have \(p_1 = 2 \), \(p_2 = 3\), the subgroups
		\(B_1 \subset O^+(2, 2) \cong C_2\) or \(B_1 \subset O^-(2, 2) \cong S_3\), and
		\(B_2 \subset \Sp(2, 3) \cong \SL(2, 3)\). Moreover, since there are only two factors, by
		Goursat's lemma, \(B\) is the fibre product of a diagram \(B_1 \twoheadrightarrow Q
		\twoheadleftarrow B_2\). Hence, the previous lemma states that Arthur-Clozel base change
		suffices if \(Q\) is nilpotent. The only non-nilpotent choice of \(B_1\) is \(S_3\).
		However, no subgroup of \(\SL(2, 3)\) has \(S_3\) as a quotient.
	\end{proof}
	\printbibliography
\end{document}

%% file: preamble.tex
\documentclass[11pt, reqno]{amsart}

\usepackage{amsmath,amssymb} 
\usepackage{newpxmath,newpxtext} 
\usepackage{amsthm}   
\usepackage{thmtools} 
\usepackage{physics}  
\usepackage{geometry} 
\usepackage{setspace} 
\usepackage{framed}   
\usepackage{tikz-cd}   
\usepackage{tikz}
\usepackage[backend=biber, style=alphabetic, maxalphanames=10, sorting=nyt, maxbibnames=10]{biblatex} 
\usepackage{graphicx}
\usepackage{calc}
\usepackage{mathtools}

\newcommand{\Hom}{\operatorname{Hom}}

\newcommand{\GL}{\operatorname{GL}}
\newcommand{\SL}{\operatorname{SL}}

\newcommand{\PGL}{\operatorname{PGL}}
\newcommand{\PSL}{\operatorname{PSL}}
\newcommand{\Sp}{\operatorname{Sp}}

\newcommand{\Ad}{\operatorname{Ad}}
\newcommand{\BC}{\operatorname{BC}}
\newcommand{\AI}{\operatorname{AI}}
\newcommand{\Ind}{\operatorname{Ind}}
\newcommand{\Gal}{\operatorname{Gal}}

\newcommand{\Aut}{\operatorname{Aut}}

\newcommand{\Frob}{\operatorname{Frob}}
\newcommand{\rec}{\operatorname{rec}}

\makeatletter
\newcommand*{\bigboxplus}{%
  \DOTSB
  \mathop{\vphantom{\bigoplus}\mathpalette\matt@bigboxplus\relax}%
  \slimits@
}
\newcommand\matt@bigboxplus[2]{%
  \vcenter{\m@th\hbox{\resizebox{\widthof{$#1\bigoplus$}}{!}{$\boxplus$}}}%
}
\makeatother

\theoremstyle{plain}
\newtheorem{proposition}{Proposition}[section]
\newtheorem{theorem}[proposition]{Theorem}
\newtheorem{lemma}[proposition]{Lemma}
\newtheorem{corollary}[proposition]{Corollary}

\theoremstyle{definition}
\newtheorem{definition}[proposition]{Definition}

\theoremstyle{remark}
\newtheorem*{remark}{Remark}